\newcommand*{\dt}[1]{%
  \accentset{\mbox{\bfseries .}}{#1}}
\newtheorem{theorem}{Theorem}[section]
\newtheorem{proposition}[theorem]{Proposition}
\newtheorem{lemma}[theorem]{Lemma}
\newtheorem{corollary}[theorem]{Corollary}
\theoremstyle{definition}
\newtheorem{definition}[theorem]{Definition}
\newtheorem{example}[theorem]{Example}
\numberwithin{equation}{section}
\numberwithin{equation}{section}
\def\0{{\bar{0}}}
\def\C{{\mathbb{C}}}
\def\R{{\mathbb{R}}}
\def\Z{{\mathbb{Z}}}
\begin{document}

\title[A sharp Balian-Low uncertainty principle for shift-invariant spaces]{A sharp Balian-Low uncertainty principle for shift-invariant spaces}

\author[D.P. Hardin]{Douglas P. Hardin}
\address{Vanderbilt University, Department of Mathematics,
Nashville, TN 37240}
\email{doug.hardin@vanderbilt.edu}
\thanks{D.~Hardin was partially supported by NSF DMS Grant 1109266, NSF DMS Grant 0934630, NSF DMS Grant 1521749, NSF DMS Grant 1516400, and
NSF DMS Grant 1412428}

\author[M.C. Northington]{Michael C. Northington V}
\address{Vanderbilt University, Department of Mathematics,
Nashville, TN 37240}
\email{michael.c.northington.v@vanderbilt.edu}
\thanks{M.~Northington was partially supported by NSF DMS Grant 0934630, NSF DMS Grant 1521749, and NSF DMS Grant 1211687}

\author[A.M. Powell]{Alexander M. Powell}
\address{Vanderbilt University, Department of Mathematics,
Nashville, TN 37240}
\email{alexander.m.powell@vanderbilt.edu}
\thanks{A.~Powell was partially supported by NSF DMS Grant 1211687 and NSF DMS Grant 1521749.}

\subjclass[2010]{Primary 42C15}

\keywords{Balian-Low theorem, shift-invariant space, uncertainty principle.}

\begin{abstract}
A sharp version of the Balian-Low theorem is proven for the generators of finitely generated shift-invariant spaces.
If generators $\{f_k\}_{k=1}^K \subset L^2(\R^d)$ are translated along a lattice to form a frame or Riesz basis for a shift-invariant space $V$, and if $V$ has extra invariance by a suitable finer lattice,
then one of the generators $f_k$ must satisfy $\int_{\R^d} |x| \thinspace |f_k(x)|^2 dx = \infty$, 
namely, $\widehat{f_k} \notin H^{1/2}(\R^d)$.  
Similar results are proven for frames of translates that are not Riesz bases without the assumption of extra lattice invariance.
The best previously existing results in the literature give a notably weaker conclusion using the Sobolev space $H^{d/2+\epsilon}(\R^d)$; 
our results provide an absolutely sharp improvement with $H^{1/2}(\R^d)$.
Our results are sharp in the sense that $H^{1/2}(\R^d)$ cannot be replaced by $H^s(\R^d)$ for any $s<1/2$.
\end{abstract}

\maketitle

\section{Introduction}

The uncertainty principle in harmonic analysis is a class of results which constrains how well-localized a function $f$ and its Fourier transform $\widehat{f}$ can be.
A classical expression of the uncertainty principle is given by the $d$-dimensional Heisenberg inequality
\begin{equation} \label{heisenberg}
\forall  f \in L^2(\R^d), \ \ \ \ \ \left( \int_{\R^d} |x|^2 |f(x)|^2 dx \right) \left( \int_{\R^d} |\xi|^2 |\widehat{f}(\xi)|^2 d\xi \right) \geq \frac{d^2}{16 \pi^2} \|f\|_{L^2(\R^d)}^4,
\end{equation}
where the Fourier transform $\widehat{f} \in L^2(\R^d)$ is defined using $\widehat{f}(\xi) = \int_{\R^d} f(x) e^{-2\pi i x \cdot \xi } dx$.
For background on this and other uncertainty principles, see \cite{FS, HJ}.  

There exist versions of the uncertainty principle which not only constrain time and frequency localization of an individual function as in \eqref{heisenberg}, 
but instead constrain the collective time and frequency localization of orthonormal bases and other structured spanning systems such as frames and Riesz bases.  
A collection $\{h_n\}_{n=1}^{\infty}$ in a Hilbert space $\mathcal{H}$ is a {\em frame} for $\mathcal{H}$ if there exist constants $0<A \leq B < \infty$
such that $$\forall h \in \mathcal{H}, \ \ \ A \|h\|_{\mathcal{H}}^2 \leq \sum_{n=1}^{\infty} | \langle h, h_n \rangle_{\mathcal{H}} |^2 \leq B \|h\|_{\mathcal{H}}^2.$$
The collection $\{h_n\}_{n=1}^{\infty}$ is a {\em Riesz basis} for $\mathcal{H}$ if it is a minimal frame for $\mathcal{H}$, i.e., $\{h_n\}_{n=1}^{\infty}$ is a frame
for $\mathcal{H}$ but $\{h_n\}_{n=1}^{\infty} \backslash \{h_N\}$ is not a frame for $\mathcal{H}$ for any $N\geq1$.  
Equivalently, $\{h_n\}_{n=1}^{\infty}$ is a Riesz basis for $\mathcal{H}$ if and only if $\{h_n\}_{n=1}^{\infty}$ is the image of an orthonormal basis under a bounded invertible operator from $\mathcal{H}$ to $\mathcal{H}$.  Every orthonormal basis is automatically a Riesz basis and a frame, but there exist frames that are not Riesz bases,
and Riesz bases that are not orthornormal bases.  See \cite{C} for background on frames and Riesz bases.

The following beautiful example of an uncertainty principle for Riesz bases was proven in \cite{GM}.   
If $\{f_n\}_{n=1}^{\infty} \subset L^2(\R^d)$ satisfies
\begin{equation}
\sup_n \left( \int_{\R^d} |x - a_n|^{2d+\epsilon} |f_n(x)|^2 dx \right) \left( \int_{\R^d} |\xi - b_n|^{2d+\epsilon} |\widehat{f_n}(\xi)|^2 d\xi \right) < \infty,
\end{equation}
for some $\epsilon>0$ and $\{(a_n, b_n)\}_{n=1}^{\infty} \subset \R^2$, then $\{f_n\}_{n=1}^{\infty}$ cannot be a Riesz basis for $L^2(\R^d)$.  Moreover,
this result is sharp in that $\epsilon$ cannot be taken to be zero, see \cite{Bour, GM}.  

There has been particular interest in uncertainty principles for bases that are endowed with an underlying group structure.  
The Balian-Low theorem for Gabor systems is a celebrated result of this type.  Given $f\in L^2(\R)$ the associated {\em Gabor system}
$\mathcal{G}(f,1,1) = \{f_{m,n}\}_{m,n \in \mathbb{Z}}$ is defined by $f_{m,n}(x) = e^{2\pi i m x} f(x - n)$.
The following nonsymmetric version of the Balian-Low theorem states that if $\mathcal{G}(f,1,1)$ is a Riesz basis for $L^2(\R)$ then $f$ must be poorly localized in either time or frequency.

\begin{theorem}[Balian-Low theorems]  \label{blt}
Let $f\in L^2(\R)$ and suppose that $\mathcal{G}(f,1,1)$ is a Riesz basis for $L^2(\R)$.
\begin{enumerate}
\item If $1<p<\infty$ and $\frac{1}{p}+\frac{1}{q}=1$, then
$$\left( \int_{\R} |x|^p |f(x)|^2 dx \right) \left( \int_{\R} |\xi|^q |\widehat{f}(\xi)|^2 d\xi \right) = \infty.$$
\item If $\widehat{f}$ is compactly supported, then 
$$\int_{\R} |x| \thinspace |f(x)|^2 d x = \infty.$$
The same result holds with the roles of $f$ and $\widehat{f}$ interchanged.
\end{enumerate}
\end{theorem}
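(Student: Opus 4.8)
\emph{Proof strategy.} The plan is to argue by contradiction using the Zak transform $Z\colon L^2(\R)\to L^2([0,1]^2)$, $Zf(x,\xi)=\sum_{k\in\Z}f(x+k)e^{-2\pi ik\xi}$, which is unitary and quasi-periodic, $Zf(x+1,\xi)=e^{2\pi i\xi}Zf(x,\xi)$ and $Zf(x,\xi+1)=Zf(x,\xi)$, together with the classical characterization that $\mathcal{G}(f,1,1)$ is a Riesz basis for $L^2(\R)$ if and only if there are constants $0<A\le B<\infty$ with $A\le|Zf(x,\xi)|^2\le B$ a.e. Hence, under the Riesz basis hypothesis, $\Phi:=Zf$ satisfies $\Phi,1/\Phi\in L^\infty([0,1]^2)$. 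I would assume, toward a contradiction, that the product of the two integrals is finite (resp., in part (2), that $\int_\R|x|\,|f(x)|^2\,dx<\infty$; note that $\widehat f$ compactly supported already forces $\int_\R|\xi|^q|\widehat f(\xi)|^2\,d\xi<\infty$).

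Next I would push the localization hypotheses through $Z$ via the intertwining identities $\partial_x(Zf)=Z(f')$ and $\partial_\xi(Zf)=-2\pi i\bigl(Z(xf)-M_x Zf\bigr)$, where $M_x$ is multiplication by the fundamental-domain variable $x\in[0,1]$: the frequency-side decay $\int_\R|\xi|^q|\widehat f|^2<\infty$ yields differentiability of $\Phi$ in $x$, and the time-side decay $\int_\R|x|^p|f|^2<\infty$ yields differentiability in $\xi$; when $p=q=2$ this is precisely $\Phi\in H^1([0,1]^2)$. I would then set $h:=\partial_x\Phi/\Phi$ and $k:=\partial_\xi\Phi/\Phi$; since $1/\Phi\in L^\infty$, these are (at least) distributions, and because $h$ and $k$ are locally the two partial derivatives of a branch of $\log\Phi$, they satisfy $\partial_\xi h=\partial_x k$. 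The quasi-periodicity of $\Phi$ translates into: $h$ is $1$-periodic in both variables, while $k$ is $1$-periodic in $\xi$ and satisfies $k(x+1,\xi)=k(x,\xi)+2\pi i$. Now I would extract the $0$-th Fourier coefficient in $\xi$, i.e.\ pair $\partial_\xi h=\partial_x k$ against the test function $\varphi(x)\cdot 1$: the left-hand side vanishes (periodicity of $h$ in $\xi$), so $\partial_x\!\int_0^1 k(x,\xi)\,d\xi=0$, i.e.\ $k_0(x):=\int_0^1 k(x,\xi)\,d\xi$ is constant in $x$; but $k_0(x+1)=k_0(x)+2\pi i$, a contradiction. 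Therefore the product of integrals must be infinite (resp., $\int_\R|x||f|^2=\infty$). For part (2), compact support of $\widehat f$ (equivalently, after a Fourier flip, of $f$) makes $Zf$ a finite trigonometric polynomial in one of the two variables, hence real-analytic there, which supplies the regularity in that variable for free; one then only needs the half-derivative's worth of regularity in the remaining variable that comes from $\int_\R|x||f|^2<\infty$.

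The main obstacle is that the hypotheses are exactly borderline, so the delicate point is to justify that $h$ and $k$ really are well-defined distributions with $\partial_\xi h=\partial_x k$ and that the pairing against $\varphi(x)\cdot 1$ is legitimate, using only the limited regularity available. When $p=q=2$ one has $\Phi\in H^1([0,1]^2)$ (which does not even embed into $C^0$), so $h,k\in L^2_{\mathrm{loc}}$ and the argument is clean once the distributional identity is checked via a local logarithm and a mollification that respects the quasi-periodicity. But for general conjugate exponents $p,q$ only an anisotropic amount of smoothness is available, with the orders $(q/2,p/2)$ sitting exactly at the relevant threshold because $1/p+1/q=1$, so one must replace the ordinary derivatives above by fractional ones on the Zak side and handle $\partial_x\Phi/\Phi$ as a product at the endpoint of the available multiplier/paraproduct estimates; and in part (2) the hypothesis $\int_\R|x||f|^2<\infty$ gives only $H^{1/2}$-regularity — precisely the sharp amount — so the same endpoint issue arises. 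Making these borderline manipulations rigorous (fractional derivatives on the Zak side, commutator/paraproduct estimates, and a mollification compatible with the quasi-periodicity) is the crux; once that is in place, the $0$-th-Fourier-mode computation yields the contradiction immediately.
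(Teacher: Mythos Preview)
The paper does not prove Theorem~\ref{blt}; it is stated as background in the introduction and attributed to the literature (Balian, Low, Battle, Daubechies--Janssen for the classical $p=q=2$ case; Gautam~\cite{G} for the general $(p,q)$ version). So there is no ``paper's own proof'' against which to compare your attempt.

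That said, your sketch is the standard Zak-transform route, and for $p=q=2$ the outline is essentially correct: $\Phi=Zf$ is bounded above and below, the time/frequency localization puts $\Phi\in H^1([0,1]^2)$, and the quasi-periodicity forces a nontrivial winding which is incompatible with $\Phi$ having a well-defined logarithmic derivative on the torus. Your $0$-th Fourier mode contradiction is one clean way to encode this degree obstruction. Where your write-up is honest but incomplete is precisely where you say so: for general conjugate $(p,q)$ with $p\neq 2$, and for part~(2), the available regularity is only fractional and anisotropic, and the passage from ``$\partial_x\Phi/\Phi$, $\partial_\xi\Phi/\Phi$ make sense and satisfy the curl identity'' to a rigorous contradiction is exactly the hard content of Gautam's paper~\cite{G}. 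Simply invoking ``fractional derivatives and paraproduct estimates at the endpoint'' is not a proof --- that is the theorem. In particular, your formulas $\partial_x(Zf)=Z(f')$ and the analogous $\xi$-identity presuppose that $f'\in L^2$ and $xf\in L^2$, which is only the $p=q=2$ case; for other exponents one does not have honest $L^2$ derivatives to push through $Z$, and the substitute (fractional smoothness of $\Phi$) does not immediately give $h,k$ as distributions with $\partial_\xi h=\partial_x k$. So as written, part~(1) for $p\neq 2$ and part~(2) remain genuine gaps in your proposal.
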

The original Balian-Low theorem \cite{Bal81,Low} formulated 
the case $p=q=2$ in part (1) of Theorem \ref{blt} for orthonormal bases.  
The non-symmetrically weighted $(p,q)$ versions with $p\neq q$  in Theorem \ref{blt} were subsequently proven in \cite{G}.  
There are numerous extensions of the Balian-Low theorem, e.g., see the surveys \cite{BHW, CP} and articles \cite{AFK, BD, Bat88, BCGP, BCM, BCP, BCPS, DJ, GH, GHHK, HP1, HP2, J, L, NO1, NO2}.

\subsection*{Overview and main results}

In this paper we will focus on the interesting recent extensions \cite{ASW, TW} of the Balian-Low theorem to the setting of shift-invariant spaces.  Our main goal is to {\em prove sharp versions of Balian-Low type theorems in shift-invariant spaces}.  

Let us begin by recalling some notation on shift-invariant spaces.  Given $f \in L^2(\R^d)$ and $\lambda \in \R^d$ the
translation operator $T_{\lambda}:L^2(\R^d) \to L^2(\R^d)$ is defined by $T_{\lambda}f(x) = f(x-\lambda)$.
\begin{definition} 
Let $\Lambda, \Gamma$ be lattices in $\R^d$ with $\Lambda \subset \Gamma$.  Fix a $K$-tuple 
$F=(f_1, \cdots, f_K)$
where each $f_k \in L^2(\R^d)$.  With slight abuse of notation, we denote this by $F=\{f_k\}_{k=1}^K \subset L^2(\R^d)$. Assume at least one $f_k$ satisfies $||f_k||_2\neq 0$, i.e., $F$ is nontrivial.
\begin{enumerate}
\item $\mathcal{T}^{\Lambda} (F)$ denotes the system of translations $\{T_{\lambda} f : f \in F \hbox{ and } \lambda \in \Lambda\} $ viewed as a multiset.
\item $V^{\Lambda}(F)=V^\Lambda(f_1,\cdots,f_K)$ denotes the closed linear span of $\mathcal{T}^{\Lambda} (F)$ in $L^2(\R^d)$.  The space $V^\Lambda(F)$ is said to be a finitely generated shift-invariant space generated by $F$. We shall call the elements of $F$ generators of $V^\Lambda(F)$.
\item If $F =\{f \}$ consists of a single function, then $V^{\Lambda}(F) = V^{\Lambda}(f)$ is said to be a singly generated (or principal) shift-invariant space with 
{generator} $f$.
\item The minimal number of generators $\rho(F,\Lambda)$ of the space $V^{\Lambda}(F)$ is defined by 
\[\rho(F,\Lambda)=\min\{N\in \mathbb{N}: \exists\text{ an $N$-tuple } G=\{g_n\}_{n=1}^N \text{ such that } V^\Lambda(G)=V^\Lambda(F)\}.\]
\item $V^{\Lambda}(F)$ is said to be {\em $\Gamma$-invariant} if $f \in V^{\Lambda}(F)$ implies that $T_{\gamma} \thinspace f \in V^{\Lambda}(F)$ 
for all $\gamma \in \Gamma$. 
\item  $V^{\Lambda}(F)$ is said to be {\em translation invariant} if $f \in V^{\Lambda}(F)$ implies that $T_t \thinspace f \in V^{\Lambda}(F)$ for all $t\in \mathbb{R}^d$.
\end{enumerate}
\end{definition}

In contrast with the Balian-Low theorem for Gabor systems, it is possible for $f \in L^2(\R^d)$ to be well-localized in both time and frequency 
and for the system of shifts $\mathcal{T}^{\Lambda}(f)$ to be a Riesz basis for $V^{\Lambda}(f)$.  
For example, if $f\in C^{\infty}(\R)$ is compactly supported in $[-1/2,1/2]$, then $\mathcal{T}^{\mathbb{Z}}(f)$ is an orthonormal
basis for $V^{\mathbb{Z}}(f)$.  In view of this, a Balian-Low type theorem will not hold for shift-invariant spaces unless extra assumptions on the space are considered.

Our first main result is the following.  This result resolves a question posed in \cite{TW} concerning the sharp scale of Sobolev spaces needed for Balian-Low type theorems in shift-invariant spaces.

\begin{theorem} \label{sharp-blt} 
Fix lattices $\Lambda, \Gamma \subset \R^d$ with $\Lambda \subset \Gamma$ and $[\Gamma : \Lambda ]>1$.   Suppose that  $F=\{f_k\}_{k=1}^K \subset L^2(\R^d)$ 
is nontrivial and that
$\mathcal{T}^{\Lambda}(F)$ is a frame for $V^{\Lambda}(F)$.
If $[\Gamma: \Lambda]$ is not a divisor of $\rho(F,\Lambda)$ and $V^{\Lambda}(F)$ is $\Gamma$-invariant, then 
$$\exists \thinspace 1 \leq k \leq K \ \hbox{ such that } \ \ \int_{\R^d} |x| \thinspace |f_k(x)|^2 dx = \infty.$$
In other words, at least one of the generators satisfies $\widehat{f_k} \notin H^{1/2}(\R^d)$.
\end{theorem}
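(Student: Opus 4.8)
The plan is to work on the Fourier side using the fiberization (range function) description of shift-invariant spaces and to exploit the $\Gamma$-invariance hypothesis to force a dimension mismatch that can only be resolved if some generator is badly localized. First I would normalize $\Lambda = \Z^d$ (conjugating by a linear map) and let $m = [\Gamma:\Lambda] > 1$. Recall that a $\Z^d$-shift-invariant space $V$ is determined by its range function $J(\xi) \subset \ell^2(\Z^d)$, with $\dim J(\xi)$ equal a.e.\ to $\rho(F,\Z^d)$; and that $V$ is $\Gamma$-invariant if and only if, for a.e.\ $\xi$, the fiber space $J(\xi)$ is invariant under the unitary diagonal modulation operators coming from the dual group $\Gamma^*/\Lambda^* \cong \Z/m$ (this is the extra-invariance characterization of Aldroubi--Cabrelli--Hardin--Molter / Tessera--Wang that is implicitly the setting of \cite{ASW, TW}). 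Diagonalizing this finite abelian group action splits $\ell^2(\Z^d)$ into $m$ "character" subspaces, and $J(\xi)$ decomposes correspondingly; the point of the hypothesis ``$m \nmid \rho(F,\Z^d)$'' is that the dimensions of these $m$ pieces cannot all be equal, so on a positive-measure set of $\xi$ the function $\xi \mapsto \dim(J(\xi) \cap E_j(\xi))$ is nonconstant across $j$, i.e.\ the range function, reorganized over the coarser torus $\R^d/\Gamma^*$, has a jump in its dimension.

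The second step converts this dimension jump into an analytic obstruction. Since $\mathcal{T}^{\Z^d}(F)$ is a frame for $V$, the fibers $\{\widehat{f_k}(\xi + \nu)\}_\nu$ span $J(\xi)$ with uniform frame bounds; passing to the finer lattice $\Gamma^*$, the reorganized generators must span a range function whose dimension is genuinely discontinuous (it takes at least two distinct values, each on a set of positive measure, and these sets have a common boundary point in an essential sense because the total dimension $\rho$ is constant). The heart of the matter is a quantitative local statement: if a finite family of $L^2$ functions generates, with uniform frame bounds, a shift-invariant space whose fiber dimension is not essentially constant, then at least one generator fails to lie in $H^{1/2}$ — equivalently $\int |x|\,|f_k|^2 = \infty$. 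I would prove this by a contradiction/continuity argument: $\widehat{f_k} \in H^{1/2}$ would make the map $\xi \mapsto (\widehat{f_k}(\xi+\nu))_\nu \in \ell^2(\Z^d)$ Hölder-$1/2$ (or at least have a square-integrable modulus of continuity of the right order) in an averaged sense, and a frame of such fibers cannot have a dimension that jumps — the Gram matrix $G(\xi) = [\langle \text{fiber}_k, \text{fiber}_l\rangle]$ would have continuous entries, so its rank (the fiber dimension) would be lower semicontinuous but also, by the frame lower bound applied locally, could not drop; this rigidity is incompatible with the jump produced in step one. The sharp exponent $1/2$ is exactly what is needed for the relevant one-dimensional boundary trace/embedding estimate — this is the classical mechanism behind the Gabor Balian--Low theorem (the $p=q=2$ case), repackaged fiber-wise.

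I expect the main obstacle to be the second step: making precise the sense in which membership in $H^{1/2}$ forbids a fiber-dimension jump. The subtlety is that $\widehat{f_k} \in H^{1/2}(\R^d)$ does not give pointwise continuity of the fibers (that would need $H^{d/2+\epsilon}$, which is exactly the weaker hypothesis in \cite{ASW, TW} that we want to beat), so one cannot argue by naive continuity of the Gram matrix. Instead one must use that the \emph{relevant} object is a restriction/trace to the lower-dimensional set across which the dimension jumps — a hyperplane (coset of $\Gamma^*$ inside the $\Z^d$-fiber structure) — and $H^{1/2}$ is precisely critical for the trace onto a codimension-one set. So the technical core will be a trace-theorem argument: $H^{1/2}(\R^d)$ functions have well-defined $L^2$ restrictions to the separating hyperplane, and a local frame lower bound on both sides forces the restricted fibers to still span, contradicting the dimension drop. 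A secondary technical point is handling frames that are not Riesz bases, where the "dimension" must be read off from the range function rather than from linear independence of translates; but the range-function formalism already absorbs this, which is why the theorem is stated uniformly for frames. Finally, the reduction of ``$\int |x|\,|f_k(x)|^2 dx = \infty$'' to ``$\widehat{f_k} \notin H^{1/2}(\R^d)$'' is the standard identity $\int |x|\,|f(x)|^2 dx \asymp \| \widehat{f} \|_{\dot H^{1/2}}^2$ up to interpreting fractional derivatives, and is purely bookkeeping.
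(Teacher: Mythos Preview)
Your overall architecture is right and matches the paper's: assume all $\widehat{f_k}\in H^{1/2}(\R^d)$, use the extra-invariance characterization to force the rank of the $\Gamma^*$-Gramian to be nonconstant on the $\Gamma^*$-torus (since constant rank would give $\rho(F,\Lambda)=[\Gamma:\Lambda]\cdot\rho(F,\Gamma)$), and derive a contradiction from the $H^{1/2}$ regularity. Your Step~1 is essentially the paper's Step~III.

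The genuine gap is in your Step~2 mechanism. The trace-theorem idea is a red herring: the set across which the rank of $P_{\Gamma^*}(\widehat F)$ jumps is an arbitrary measurable subset of the torus, not a hyperplane or any codimension-one object, so the $H^{1/2}\to L^2(\text{hyperplane})$ trace plays no role. There is no ``separating hyperplane'' in this problem --- the cosets of $\Gamma^*$ in $\Lambda^*$ are lattice points, not hypersurfaces. What the paper does instead is exactly what you glimpsed and then abandoned: it works with the Gramian entries, but in $H^{1/2}$ of the torus rather than aiming for pointwise continuity. Concretely, the paper shows (i) the bracket products $[\widehat{f_m},\widehat{f_n}]_{\Gamma^*}$ lie in $H^{1/2}(\R^d/\Gamma^*)$ whenever $\widehat{f_m},\widehat{f_n}\in H^{1/2}(\R^d)$ and the periodizations are bounded (guaranteed by the frame condition); (ii) by the Courant--Fischer min-max inequality, the ordered eigenvalue functions $\lambda_k(\xi)$ of $P_{\Gamma^*}(\widehat F)(\xi)$ are then also in $H^{1/2}(\R^d/\Gamma^*)$, since $|\lambda_k(\eta)-\lambda_k(\xi)|\le \|P(\eta)-P(\xi)\|_{\rm Frob}$; (iii) the frame inequality $tP\le P^2$ forces each $\lambda_k$ to take values in $\{0\}\cup[t,\infty)$; and (iv) the key analytic fact --- a function in $H^{1/2}(\R^d/\Gamma^*)$ that vanishes on a set $S$ and is bounded below by $t>0$ on $S^c$ must have $|S|\in\{0,|M_{\Gamma^*}|\}$ (the torus analogue of ``no characteristic functions in $H^{1/2}$''). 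This last fact is where the criticality of $1/2$ genuinely enters, via the double-integral seminorm $\int\!\!\int |g(x+y)-g(x)|^2/|y|^{d+1}\,dx\,dy$, not via a boundary trace.

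So the fix is: drop the trace-theorem line and instead prove that a Hermitian positive semi-definite matrix function on the torus with $H^{1/2}$ entries and satisfying $tP\le P^2$ has constant rank a.e., by passing to eigenvalues and invoking the $H^{1/2}$-characteristic-function obstruction.
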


Here, $[\Gamma : \Lambda ]$ denotes the index of the lattice $\Lambda$ in $\Gamma$, see Section \ref{lattice-sec}.  
For singly generated shift-invariant spaces, Theorem \ref{sharp-blt} takes the following form.
\begin{corollary}
Fix lattices $\Lambda, \Gamma \subset \R^d$ with $\Lambda \subset \Gamma$ and $[\Gamma : \Lambda ]>1$.
Suppose $f \in L^2(\R^d)$, $\|f\|_2 \neq 0$, and $\mathcal{T}^{\Lambda}(f)$ forms a frame for $V^\Lambda(f)$.  If $V^{\Lambda}(f)$ is $\Gamma$-invariant, then $\int_{\R^d}  |x| \thinspace |f(x)|^2 dx = \infty$.
\end{corollary}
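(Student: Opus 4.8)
The plan is to reduce the singly generated case to the multi-generated Theorem~\ref{sharp-blt} by observing that when $f$ is a single generator, $\rho(f,\Lambda) \in \{0,1\}$ (it is $1$ since $\|f\|_2 \neq 0$, assuming $\mathcal{T}^\Lambda(f)$ is not the trivial system), and since $[\Gamma:\Lambda] > 1$, the index $[\Gamma:\Lambda]$ cannot divide $1$. Hence the hypotheses of Theorem~\ref{sharp-blt} are satisfied with $K = 1$, and the conclusion gives $\int_{\R^d} |x|\,|f(x)|^2\,dx = \infty$ for the (unique) generator. This is essentially immediate, so the corollary is a direct specialization.

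The one point requiring a small argument is the claim $\rho(f,\Lambda) = 1$. By definition $\rho(f,\Lambda)$ is the minimal number of generators of $V^\Lambda(f)$; since $f$ generates $V^\Lambda(f)$ we have $\rho(f,\Lambda) \leq 1$, and since $\|f\|_2 \neq 0$ the space $V^\Lambda(f)$ is nontrivial so cannot be generated by an empty tuple, giving $\rho(f,\Lambda) = 1$. (If one's convention allows $\rho = 0$ only for the zero space, this is automatic; in either case $1$ is not divisible by $[\Gamma:\Lambda] > 1$.)

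Then I would simply invoke Theorem~\ref{sharp-blt}: the system $F = \{f\}$ is nontrivial, $\mathcal{T}^\Lambda(F) = \mathcal{T}^\Lambda(f)$ is a frame for $V^\Lambda(F) = V^\Lambda(f)$, the space is $\Gamma$-invariant by hypothesis, and $[\Gamma:\Lambda]$ does not divide $\rho(F,\Lambda) = 1$. Theorem~\ref{sharp-blt} then produces some $1 \leq k \leq K = 1$ with $\int_{\R^d} |x|\,|f_k(x)|^2\,dx = \infty$, i.e., $\int_{\R^d} |x|\,|f(x)|^2\,dx = \infty$, equivalently $\widehat{f} \notin H^{1/2}(\R^d)$.

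There is no real obstacle here beyond bookkeeping the definition of $\rho$; the entire mathematical content lives in Theorem~\ref{sharp-blt}, and the corollary is a one-line consequence once one notes that a single generator can never have its generator-count divisible by an index exceeding one.
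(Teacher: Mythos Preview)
Your proposal is correct and matches the paper's approach: the paper states this corollary immediately after Theorem~\ref{sharp-blt} without a separate proof, treating it as the direct specialization to $K=1$ that you describe. The only substantive check is that $\rho(f,\Lambda)=1$, which you have justified correctly.
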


To put Theorem \ref{sharp-blt} in perspective, note that all previously existing results in the literature, see \cite{ASW, TW}, either give a weaker conclusion or require stronger hypotheses.
In particular, the foundational Theorem 1.2 in \cite{ASW} addresses singly generated shift-invariant spaces in dimension $d=1$ and 
gives the weaker conclusion that the generator $f\in L^2(\R)$ satisfies $\widehat{f} \notin H^{1/2+ \epsilon}(\R)$ whenever $\epsilon>0$.  
The situation is more extreme in higher dimensions $d \geq 1$, where 
Theorem 1.3 in \cite{TW} gives the weaker conclusion that at least one generator satisfies  
$\widehat{f_k} \notin H^{d/2+ \epsilon}(\R^d)$.
On the other hand, Theorem 1.2 in \cite{TW} shows if the hypothesis of ${\Gamma}$-invariance is replaced
by the notably stronger hypothesis of translation invariance, then at least one generator satisfies $\widehat{f_k} \notin H^{1/2}(\R^d)$.

Theorem \ref{sharp-blt} is sharp in the sense that $H^{1/2}(\R^d)$ cannot be replaced by $H^s(\R^d)$ when $s<1/2$.  
For example,  if $\chi_I$ is the characteristic function of the set $I=[-1/2,1/2]^d$ and 
$f(x) = \widehat{\chi_I}(x)$, then the space $V^{\mathbb{Z}^d}(f)$ is translation invariant and $\widehat{f} \in H^s(\R^d)$ for all $0< s <1/2$, cf. Proposition 1.5 in \cite{ASW} and Proposition 1.5 in \cite{TW}.

Theorem \ref{sharp-blt} is precise in the sense that it is possible for only one generator in a multiply generated system to suffer from the localization constraint $f_k \notin H^{1/2}(\R^d)$. 
In particular, we construct examples of $F=\{f_k\}_{k=1}^K$ that satisfy the hypotheses of Theorem \ref{sharp-blt},
and where $f_K \notin H^{1/2}(\R^d)$ but all other generators $f_1, \cdots, f_{K-1}$ are in $H^{1/2}(\R^d)$.  
This answers a question posed in \cite{TW} about the proportion of generators with good localization.
See Examples  \ref{example-2gen-Rd} and \ref{example-Ngen-R} in Section \ref{examples-sec} for details.

Note that Theorem \ref{sharp-blt} does not contain the compact support hypothesis that is needed in part (2) of Theorem \ref{blt}.
For perspective, Theorem \ref{blt} requires the condition that $\mathcal{G}(f,1,1)$ is a Riesz basis for the entire space $L^2(\R)$, whereas
Theorem \ref{sharp-blt} only requires the weaker assumption that $\{f(x-n): n \in \mathbb{Z} \}$ is a frame for its closed linear span $V^{\Z}(f)$ in $L^2(\R)$.  
Moreover, it is known that $\{f(x-n): n \in \mathbb{Z} \}$ cannot be a frame for the entire space $L^2(\R)$, e.g., see the literature on Gabor density theorems, \cite{H}.

Our second main result is the following.  

\begin{theorem} \label{frame-not-riesz} 
Fix a lattice $\Lambda \subset \R^d$.   Suppose that  $F=\{f_k\}_{k=1}^K \subset L^2(\R^d)$ is nontrivial and that
$\mathcal{T}^{\Lambda}(F)$ is a frame for $V^{\Lambda}(F)$, but is not a Riesz basis for $V^{\Lambda}(F)$.  If $K=\rho(F,\Lambda)$ then 
$$\exists \thinspace 1 \leq k \leq K \ \hbox{ such that } \ \ \int_{\R^d} |x| \thinspace |f_k(x)|^2 dx = \infty.$$
\end{theorem}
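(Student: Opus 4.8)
The plan is to use the Fourier transform to pass to the fiberization / range-function picture for shift-invariant spaces, and then to exploit the gap between ``frame'' and ``Riesz basis'' via the Gramian of the generators. First I would normalize $\Lambda = \Z^d$ (replacing $F$ by its image under the linear map carrying $\Lambda$ to $\Z^d$, which changes $\int |x||f_k|^2$ only by bounded factors and preserves all hypotheses). After a Fourier transform, $V^{\Z^d}(F)$ corresponds to a shift-invariant subspace of $L^2(\R^d)$ described by the range function $J(\xi) = \overline{\operatorname{span}}\{(\widehat{f_k}(\xi+m))_{m\in\Z^d} : 1\le k\le K\}\subset \ell^2(\Z^d)$, and the system $\mathcal{T}^{\Z^d}(F)$ is a frame for $V$ exactly when the Gramian $G(\xi) = \big(\langle \widehat{f_k}(\cdot+m), \widehat{f_j}(\cdot+m)\rangle_{\ell^2(\Z^d)}\big)_{k,j}$, a $K\times K$ nonnegative matrix-valued function on the torus, has eigenvalues bounded below by $A>0$ and above by $B<\infty$ on the set where it is nonzero. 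The hypothesis $K = \rho(F,\Lambda)$ forces $G(\xi)$ to be nonsingular (rank exactly $K$) for a.e. $\xi$ in a set of positive measure — indeed, if $G$ were singular on a full-measure set, one could reduce the number of generators, contradicting minimality. Combining this with the frame bounds, $G(\xi)$ is in fact invertible with $\|G(\xi)^{-1}\|\le 1/A$ for a.e. $\xi$.

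Next I would use the ``not a Riesz basis'' hypothesis. A frame of translates $\mathcal{T}^{\Z^d}(F)$ is a Riesz basis for $V$ precisely when it has no redundancy, i.e. when the periodization map has trivial kernel; in fiber terms this means the vectors $\{(\widehat{f_k}(\xi+m))_k\}$ — equivalently the fibers spanning $J(\xi)$ — are not merely spanning but ``tight'' in the sense that there is no nonzero periodic $\C^K$-valued relation $\sum_k c_k(\xi)\widehat{f_k}(\xi+m)=0$ holding on a positive-measure set. So if $\mathcal{T}^{\Z^d}(F)$ is a frame but not a Riesz basis, there must be a measurable set $E$ of positive measure and a measurable unit vector field $\xi\mapsto c(\xi)\in\C^K$ on $E$ such that $\sum_{k=1}^K c_k(\xi)\widehat{f_k}(\xi+m) = 0$ for all $m\in\Z^d$ and a.e. $\xi\in E$. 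This is where the tension with $K=\rho(F,\Lambda)$ matters: on the set $E$ the Gramian $G(\xi)$ annihilates $c(\xi)$, so $G(\xi)$ is singular there, while minimality forces $G$ to be nonsingular a.e. — hence $E$ must have measure zero, which is a contradiction unless... so in fact the scenario ``frame but not Riesz basis with $K=\rho$'' should be impossible, and the theorem would hold vacuously.

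Wait — that would make the theorem trivial, so the real content must be subtler: the point is presumably that a frame of translates with the \emph{minimal} number of generators can still fail to be a Riesz basis because the Gramian, while a.e. nonsingular, can have its smallest eigenvalue fail to be bounded below (the frame lower bound being for $V$, not fiberwise). So the correct mechanism is: $\mathcal{T}^{\Z^d}(F)$ fails to be a Riesz basis iff $\operatorname{ess\,inf}_\xi \lambda_{\min}(G(\xi)) = 0$ on the support, i.e. $G(\xi)^{-1}$ is not essentially bounded even though $G(\xi)$ is a.e. invertible. The plan then is: suppose for contradiction that every $\widehat{f_k}\in H^{1/2}(\R^d)$, i.e. $(1+|\xi|)^{1/2}\widehat{f_k}\in L^2$. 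I would show that $H^{1/2}$ regularity of the generators forces a quantitative lower bound on $\lambda_{\min}(G(\xi))$ — essentially a localization estimate showing the fibers cannot become ``too parallel'' too often — and conclude that $\mathcal{T}^{\Z^d}(F)$ must then be a Riesz basis, contradicting the hypothesis. The main obstacle, and the technical heart of the argument, is this last step: converting the $H^{1/2}$ bound (a statement about decay/smoothness of the $\widehat{f_k}$, equivalently $\int|x||f_k|^2<\infty$) into a uniform lower bound on the smallest Gramian eigenvalue. I expect this to go through a Logvinenko–Sereda / Turán-type argument or an analyticity/quasi-analyticity argument at the ``seams'' of the periodization (the half-integer translates play the role of the boundary where the Zak-transform-type function would have to vanish to high order), mirroring the compact-support case of Theorem \ref{blt}(2) but now localized to the set where the Gramian degenerates; controlling the interaction of the $K$ generators simultaneously (rather than a single function) is the additional difficulty relative to the classical Balian--Low proof.
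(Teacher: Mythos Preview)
Your proposal has the right contradiction framework (assume every $\widehat{f_k}\in H^{1/2}(\R^d)$ and deduce a Riesz basis), but there is a genuine gap in the mechanism you describe, and the tools you suggest would not close it.

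First, your reading of ``frame but not Riesz basis'' in the Gramian picture is incorrect. The frame condition \eqref{frame-condition}, $t^{-1}P \le P^2 \le tP$, forces every \emph{nonzero} eigenvalue of $P(\xi)=P_{\Lambda^*}(\widehat{F})(\xi)$ to lie in $[t^{-1},t]$. Hence it is impossible for $P(\xi)$ to be a.e.\ invertible while $\lambda_{\min}(P(\xi))$ fails to be bounded below: if $\operatorname{rank}P(\xi)=K$ a.e., then automatically $\lambda_{\min}\ge t^{-1}$ a.e.\ and you already have a Riesz basis. The correct picture is that $K=\rho(F,\Lambda)$ gives $\operatorname{rank}P(\xi)=K$ on a set of positive measure, while ``not a Riesz basis'' forces $\operatorname{rank}P(\xi)<K$ on another set of positive measure. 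The obstruction is a \emph{rank jump}, not a continuous eigenvalue degeneration.

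Second, and consequently, the right technical input is not Logvinenko--Sereda, analyticity, or Zak-transform boundary behaviour. What is needed is the classical fact (essentially Bourgain--Brezis--Mironescu) that a nontrivial characteristic function does not belong to $H^{1/2}$ of the torus. The paper's argument runs: (i) if each $\widehat{f_k}\in H^{1/2}(\R^d)$ then, using the Gagliardo seminorm and the $L^\infty$ bound on diagonal periodizations coming from the frame upper bound, the bracket products $[\widehat{f_j},\widehat{f_k}]_{\Lambda^*}$ lie in $H^{1/2}(\R^d/\Lambda^*)$, so all entries of $P$ do; (ii) by the Courant--Fischer min-max principle, each ordered eigenvalue satisfies $|\lambda_k(\xi)-\lambda_k(\eta)|\le \|P(\xi)-P(\eta)\|_{\mathrm{Frob}}$, hence each $\lambda_k\in H^{1/2}(\R^d/\Lambda^*)$; (iii) since the frame condition forces each $\lambda_k(\xi)$ to be either $0$ or $\ge t^{-1}$ a.e., the $H^{1/2}$ characteristic-function fact implies each $\lambda_k$ is either identically zero or a.e.\ positive, so $\operatorname{rank}P(\xi)$ is constant a.e. With $K=\rho(F,\Lambda)$ that constant is $K$, yielding the Riesz basis condition and the contradiction. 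Your proposal does not identify steps (i)--(iii), and the alternative machinery you suggest does not detect rank jumps of this kind.
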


For singly generated shift-invariant spaces, Theorem \ref{frame-not-riesz} takes the following form.
\begin{corollary} \label{frame-not-riesz-cor}
Fix a lattice $\Lambda \subset \R^d$.  Suppose $f \in L^2(\R^d)$ with $\|f\|_2\neq 0$.  
If $\mathcal{T}^{\Lambda}(f)$ is a frame for $V^{\Lambda}(f)$, but is not a Riesz basis for $V^{\Lambda}(f)$, then 
$\int_{\R^d} |x| \thinspace |f(x)|^2 dx = \infty,$ i.e., $\widehat{f} \not\in H^{1/2}(\R^d)$.
\end{corollary}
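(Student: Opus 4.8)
This corollary is the case $K=1$ of Theorem~\ref{frame-not-riesz}: since $\|f\|_2\neq 0$ we have $V^{\Lambda}(f)\neq\{0\}$, so $\rho(\{f\},\Lambda)=1=K$. I therefore sketch how I would prove the $K=1$ statement directly. Applying the linear change of variables that carries $\Lambda$ onto $\Z^d$, together with the associated unitary dilation of $f$ --- which preserves the hypotheses and, because the relevant matrix $M$ satisfies $|M^{-1}y|\asymp|y|$, preserves both $\int_{\R^d}|x|\,|f(x)|^2\,dx=\infty$ and the equivalent condition $\widehat f\notin H^{1/2}(\R^d)$ --- we may assume $\Lambda=\Z^d$. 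Set $\Phi_f(\xi)=\sum_{k\in\Z^d}|\widehat f(\xi+k)|^2$. By the standard fiberization description of systems of integer translates, $\mathcal{T}^{\Z^d}(f)$ is a frame for $V^{\Z^d}(f)$ precisely when there are constants $0<A\le B<\infty$ with $A\le\Phi_f\le B$ a.e.\ on $S:=\{\Phi_f>0\}$, and it is in addition a Riesz basis precisely when $S$ has full measure in the fundamental domain $[0,1)^d$. Hence the hypothesis that $\mathcal{T}^{\Z^d}(f)$ is a frame but not a Riesz basis means: $\Phi_f\le B$, $\Phi_f\ge A>0$ on $S$, and $|\Omega|>0$ where $\Omega:=[0,1)^d\setminus S$. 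Equivalently, $\widehat f$ vanishes a.e.\ on the $\Z^d$-periodic set $\widetilde\Omega:=\Omega+\Z^d$, while $\sum_{k\in\Z^d}|\widehat f(\xi+k)|^2\ge A$ for a.e.\ $\xi\in S$.

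Now assume, toward a contradiction, that $\widehat f\in H^{1/2}(\R^d)$, and use the Gagliardo form of the seminorm, $\|\widehat f\|_{\dot H^{1/2}(\R^d)}^2\asymp\iint_{\R^d\times\R^d}|\widehat f(x)-\widehat f(y)|^2\,|x-y|^{-(d+1)}\,dx\,dy$. Restrict the $x$-integral to $\widetilde\Omega$, where $\widehat f=0$, so the numerator becomes $|\widehat f(y)|^2$; writing $x=\xi+m$ and $y=\eta+n$ with $\xi,\eta\in[0,1)^d$ and $m,n\in\Z^d$, summing first in $m$ turns $\sum_{m\in\Z^d}|\xi-\eta+m-n|^{-(d+1)}$ into the $n$-independent $\Z^d$-periodic kernel $\mathcal K(\zeta):=\sum_{k\in\Z^d}|\zeta+k|^{-(d+1)}\ge\dist(\zeta,\Z^d)^{-(d+1)}$, and summing next in $n$ turns $\sum_{n\in\Z^d}|\widehat f(\eta+n)|^2$ into $\Phi_f(\eta)$. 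Therefore
\[
\|\widehat f\|_{\dot H^{1/2}(\R^d)}^2\ \gtrsim\ \int_{\Omega}\!\int_{[0,1)^d}\Phi_f(\eta)\,\mathcal K(\xi-\eta)\,d\eta\,d\xi\ \ge\ A\int_{\Omega}\!\int_{S}\dist(\xi-\eta,\Z^d)^{-(d+1)}\,d\eta\,d\xi .
\]
The last integral is a constant multiple of the $\Z^d$-periodic $1$-fractional perimeter of $S$, equivalently of $\|\chi_S\|_{\dot H^{1/2}}^2$ on the torus, and it is infinite for every measurable $S$ with $0<|S|<1$ --- e.g.\ because $(1-s)\iint_{S\times([0,1)^d\setminus S)}\dist(\xi-\eta,\Z^d)^{-(d+2s)}\,d\eta\,d\xi\to c_d\,\mathrm{Per}(S)>0$ as $s\uparrow 1$ (Bourgain--Brezis--Mironescu / D\'avila), so the $s=1$ integral diverges. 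This contradicts $\widehat f\in H^{1/2}(\R^d)$ and proves the corollary. The exponent $1/2$ is exactly right: for $s<1/2$ the same computation only produces a lower bound involving $\iint_{S\times([0,1)^d\setminus S)}\dist(\xi-\eta,\Z^d)^{-(d+2s)}$, which is finite for suitable $S$ --- already for $d=1$ and $\widehat f=\chi_{[0,1/2]}$ --- so no contradiction results, matching the sharpness claimed after Theorem~\ref{sharp-blt}.

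The single-generator argument above uses only two facts: the fiberization dichotomy identifying exactly when a frame of translates fails to be a Riesz basis, and the elementary fact that $\chi_S\notin\dot H^{1/2}$ whenever $0<|S|<1$ (equivalently, the endpoint fractional perimeter is infinite); neither is an obstacle. I expect the genuine difficulty of Theorem~\ref{frame-not-riesz} to appear only when $K=\rho(F,\Lambda)\ge 2$: there one must exploit the minimality of the generating set --- e.g.\ via the rank of the Gramian of $\mathcal{T}^{\Lambda}(F)$ on its fibers --- to manufacture a $\Z^d$-periodic set of positive measure on which the Fourier transform of one generator effectively vanishes while the total fiber energy stays bounded below, after which the estimate above applies. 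None of that machinery is needed for Corollary~\ref{frame-not-riesz-cor}.
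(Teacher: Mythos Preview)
Your reduction to the scalar periodization $\Phi_f$ and your periodization of the Gagliardo double integral are both correct, and the route coincides with the paper's proof of Theorem~\ref{frame-not-riesz} specialized to $K=1$. There the paper first shows $\Phi_f=[\widehat f,\widehat f]_{\Lambda^*}\in H^{1/2}(\R^d/\Lambda^*)$ via Lemma~\ref{thm-embedding} and then applies Lemma~\ref{lem-char-fun} (an $H^{1/2}$ function on the torus that vanishes on a set and is bounded below off it forces that set to be null or conull). You instead bound the $H^{1/2}(\R^d)$ seminorm of $\widehat f$ directly from below by (a constant times) the torus $H^{1/2}$ seminorm of $\chi_S$ and then invoke $\chi_S\notin H^{1/2}(\R^d/\Z^d)$ when $0<|S|<1$. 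Your shortcut pleasantly bypasses Lemma~\ref{thm-embedding}, but both arguments rest on exactly the same endpoint fact about indicators.

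The one genuine gap is your justification of that endpoint fact. The Bourgain--Brezis--Mironescu/D\'avila asymptotic $(1-s)\,I_s\to c_d\,\mathrm{Per}(S)$ as $s\uparrow1$ only says $I_s\to\infty$ near $s=1$; it gives no information at $s=\tfrac12$. Wherever $\dist(\xi-\eta,\Z^d)<1$ the kernel $\dist(\xi-\eta,\Z^d)^{-(d+2s)}$ is \emph{increasing} in $s$, so the monotonicity runs the wrong way: one obtains only $I_{1/2}\le I_s$ for $s>\tfrac12$, and divergence of the larger quantity yields no lower bound on the smaller. The correct argument is the VMO-type one carried out in Lemma~\ref{lem-char-fun} (equivalently, the direct observation in Bourgain--Brezis--Mironescu, \emph{Lifting in Sobolev spaces}, that an indicator in $H^{1/2}$ must be a.e.\ constant), not the $s\to1$ limit. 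With that substitution your proof is complete and essentially identical to the paper's.
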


Corollary \ref{frame-not-riesz-cor} stated with the weaker conclusion $\widehat{f} \not\in H^{d/2+\epsilon}(\R^d)$ (or more generally that $f$ is not integrable) 
may be considered folklore \cite{Groch}.  
The conclusion of Corollary \ref{frame-not-riesz-cor} with the condition $\widehat{f} \not\in H^{1/2}(\R^d)$ provides a significant and sharp improvement of this.
 
Theorem \ref{frame-not-riesz} is closely related to the work in \cite{GH}.
Note that, unlike Theorem \ref{sharp-blt}, Theorem \ref{frame-not-riesz} does not require an extra lattice invariance assumption for $V^{\Lambda}(F)$.  This result is sharp, as can be seen by considering $V^{\mathbb{Z}^d}(f)$ with $f(x)=\widehat{\chi_J}(x)$ and $J=[0,1/2]^d$, cf. \eqref{frame-condition}.  Moreover, Example \ref{example2-2gen-Rd} shows that it is possible for only a single
generator in Theorem \ref{frame-not-riesz} to have poor localization.

The remainder of the paper is organized as follows.  Section \ref{riesz-lattice-sec} contains background on lattices and shift-invariant spaces; Section \ref{fourier-sob-sec} contains background on Fourier coefficients and Sobolev spaces.
Section \ref{main-proofs-big-section} contains the proofs of our two main results, Theorems \ref{sharp-blt} and \ref{frame-not-riesz}.  
In particular, Section \ref{sob-gram-sec} proves a necessary Sobolev-type embedding for bracket products, Section \ref{char-Hhalf-sec} proves a crucial rank property of $H^{1/2}$-valued matrices, and Section \ref{combining-sec} combines the various preparatory results to prove Theorem \ref{sharp-blt} and Theorem \ref{frame-not-riesz}.
Section \ref{examples-sec} provides examples related to the main theorems.  The Appendix includes the proof of a background lemma concerning Sobolev spaces on the torus.

\section{Shift-invariant spaces: Riesz bases, frames, extra invariance} \label{riesz-lattice-sec}

In this section we recall necessary background and notation on lattices and shift-invariant spaces.

\subsection{Lattices} \label{lattice-sec}
A set $\Gamma \subset \R^d$ is a (full-rank) lattice if there exists a $d\times d$ nonsingular matrix $A$ such that $\Gamma = A(\mathbb{Z}^d)$.
Equivalently, if the columns of $A$ are denoted by $\{a_j\}_{j=1}^d$, then $\Gamma = \{ \sum_{j=1}^d z_j a_j : z_j \in \mathbb{Z} \}$.  In other words, $\{a_j\}_{j=1}^d$
is a basis (over $\mathbb{Z}$) for $\Gamma$.  
The dual lattice associated to $\Gamma$ is defined as $\Gamma^* = \{ \xi \in \R^d : \forall x \in \Gamma, e^{2\pi i x \cdot \xi} =1 \}$.  
In terms of the matrix $A$, the dual lattice can equivalently be defined as $\Gamma^* =(A^*)^{-1}(\mathbb{Z}^d)$.

Let $\sim$ be the equivalence relation on $\R^d$ defined by $x \sim y \iff x -y \in \Gamma$.   
We shall say that a set $S \subset \R^d$ is a fundamental domain of $\Gamma$ if $S$ contains precisely one representative of every equivalence class for the relation $\sim$.
Define ${M}_{\Gamma} \subset \R^d$ by ${M}_{\Gamma} = \{ Ax : x \in [-1/2,1/2)^d \},$ and note that $M_{\Gamma}$ is a fundamental domain
of $\Gamma$.

Given nested lattices $\Lambda \subset \Gamma$, the index of $\Lambda$ in $\Gamma$ is denoted by $\left[\Gamma:\Lambda\right]$, and is defined as the order of the quotient group $\Gamma/\Lambda$ when $\Gamma$ and $\Lambda$ are viewed as discrete subgroups of $\R^d$.  Moreover, $[\Gamma : \Lambda] >1$ if and only if the inclusion $\Lambda \subset \Gamma$ is strict, i.e., $\Lambda \subsetneq \Gamma$.


A function $f$ defined on $\R^d$ will be said to be $\Gamma$-periodic if $f(x+\gamma) = f(x)$ for all $x\in \R^d$ and $\gamma \in \Gamma$.
$L^2(\R^d/\Gamma)$ consists of the $\Gamma$-periodic square-integrable functions.  Since $M_\Gamma$ can be identified with the torus $\R^d / \Gamma$,
the space $L^2(\R^d/\Gamma)$ consists of $\Gamma$-periodic extensions to $\R^d$ of $L^2(M_\Gamma)$.

\subsection{Riesz bases, frames, and shift-invariant spaces}

The question of when a system of translates forms a frame or Riesz basis for a shift-invariance space has been well-studied.
Given $F =\{f_k\}_{k=1}^K \subset L^2(\R^d)$, we shall, with slight abuse of notation, let $F(x)$ denote the $d\times 1$ column vector whose entries are $f_k(x), 1 \leq k \leq K$.
For any $x$, $F(x)F^*(x)$ is a $K \times K$ Hermitian positive semi-definite matrix.  Given a lattice $\Lambda$, we define the $\Lambda$-Gramian of $F$ to be 
$$P_{\Lambda}(F)(x)= \sum_{\lambda \in \Lambda} F(x-\lambda) F^*(x-\lambda).$$
Note that $P_{\Lambda}(F)$  is $\Lambda$-periodic and is Hermitian positive semi-definite.  Also note that in the case when $F = \{ f \}$ is a singleton, 
$P_{\Lambda}(f) = \sum_{\lambda \in \Lambda} |f(x -\lambda)|^2$.

Given $F= \{f_k\}_{k=1}^K$, let  $\widehat{F}=\{\widehat{f_k} \}_{k=1}^K$   It is known \cite{BL, DDR}
 that $\mathcal{T}^{\Lambda}(F)$ forms a Riesz basis for $V^{\Lambda}(F)$ if and only if there exists $t \ge 1$ such that 
\begin{equation}\label{riesz-condition}
t^{-1} I \le P_{\Lambda^*}(\widehat{F})(x) \le tI \indent \text{a.e. } x \in M_{\Lambda^*}.
\end{equation}
Moreover, see \cite{B, BL},  $\mathcal{T}^{\Lambda}(F)$ forms a frame for $V^{\Lambda}(F)$ if and only if there exists $t \ge 1$ such that 
\begin{equation}\label{frame-condition}
t^{-1} P_{\Lambda^*}(\widehat{F})(x) \le (P_{\Lambda^*}(\widehat{F})(x))^2 \le tP_{\Lambda^*}(\widehat{F})(x) \indent \text{a.e. } x \in M_{\Lambda^*}.
\end{equation}

The following result addresses the minimal number of generators of shift-invariant spaces, see Proposition 4.1 in \cite{TW}.

\begin{proposition} \label{prop-min-generators}
Let $\Lambda$ be a lattice in $\R^d$, and $F \subset L^2(\R^d)$.  
The minimal number of generators of $V^\Lambda(F)$ is given by
$$\rho(F,\Lambda) = {\rm ess \thinspace sup}_{x\in \R^d} \left( \emph{rank} \left[ P_{\Lambda^*}(\widehat{F})(x) \right] \right).$$  
\end{proposition}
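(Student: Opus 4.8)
The plan is to run the standard range-function (fiberization) machinery for shift-invariant spaces and then read off the rank of $P_{\Lambda^*}(\widehat{F})$ as the fiber dimension. Fix a fundamental domain $M_{\Lambda^*}$ of the dual lattice and consider the fiberization map $\mathcal{Z}\colon L^2(\R^d)\to L^2(M_{\Lambda^*};\ell^2(\Lambda^*))$, $(\mathcal{Z}g)(x)=(\widehat{g}(x+\ell))_{\ell\in\Lambda^*}$. By Plancherel's theorem this is a unitary isomorphism, and since $\lambda\cdot\ell\in\Z$ for $\lambda\in\Lambda$, $\ell\in\Lambda^*$, it intertwines $T_\lambda$ with fiberwise multiplication by the scalar $e^{-2\pi i\lambda\cdot x}$. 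Under $\mathcal{Z}$, the shift-invariant space $V^{\Lambda}(F)$ corresponds to $\{h : h(x)\in J(x)\text{ a.e.}\}$, where, by the standard structure theory of finitely generated shift-invariant spaces (Helson; de Boor--DeVore--Ron; Bownik; cf.\ \cite{BL}), the range function is $J(x)=\mathrm{span}_{\mathbb{C}}\{(\mathcal{Z}f_1)(x),\dots,(\mathcal{Z}f_K)(x)\}$ for a.e.\ $x$. This span is finite-dimensional hence closed, the dimension function $x\mapsto \dim J(x)$ is measurable, and two shift-invariant spaces coincide if and only if their range functions agree a.e.

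The first step is to identify $\dim J(x)$ with $\mathrm{rank}\,P_{\Lambda^*}(\widehat{F})(x)$. The dimension of the span of $\{(\mathcal{Z}f_k)(x)\}_{k=1}^K$ equals the rank of the Gram matrix $\bigl[\langle (\mathcal{Z}f_k)(x),(\mathcal{Z}f_j)(x)\rangle_{\ell^2(\Lambda^*)}\bigr]_{k,j=1}^K$, and a direct computation gives $\langle (\mathcal{Z}f_k)(x),(\mathcal{Z}f_j)(x)\rangle_{\ell^2(\Lambda^*)}=\sum_{\ell\in\Lambda^*}\widehat{f_k}(x+\ell)\,\overline{\widehat{f_j}(x+\ell)}$, which after reindexing $\ell\mapsto -\ell$ is exactly the $(k,j)$ entry of $P_{\Lambda^*}(\widehat{F})(x)=\sum_{\ell\in\Lambda^*}\widehat{F}(x-\ell)\widehat{F}^*(x-\ell)$. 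Hence $\mathrm{rank}\,P_{\Lambda^*}(\widehat{F})(x)=\dim J(x)$ for a.e.\ $x$, and since $P_{\Lambda^*}(\widehat{F})$ is $\Lambda^*$-periodic the essential supremum over $\R^d$ equals that over $M_{\Lambda^*}$.

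It then remains to prove $\rho(F,\Lambda)=\mathrm{ess\,sup}_x\dim J(x)=:\ell$. For the lower bound, if $G=\{g_n\}_{n=1}^N$ satisfies $V^{\Lambda}(G)=V^{\Lambda}(F)$, then the common range function satisfies $J(x)=\mathrm{span}\{(\mathcal{Z}g_n)(x)\}_{n=1}^N$, so $\dim J(x)\le N$ a.e.\ and thus $N\ge\ell$; hence $\rho(F,\Lambda)\ge\ell$. For the matching upper bound, partition $M_{\Lambda^*}$ into the measurable sets $\sigma_j=\{x:\dim J(x)=j\}$, $0\le j\le\ell$, use a measurable selection theorem for range functions to choose on $\{x:\dim J(x)\ge i\}$ an orthonormal family $e_1(x),\dots,e_{\dim J(x)}(x)$ spanning $J(x)$, and for $1\le i\le\ell$ define $g_i\in L^2(\R^d)$ by $(\mathcal{Z}g_i)(x)=e_i(x)$ if $\dim J(x)\ge i$ and $0$ otherwise (this lies in $L^2$ since $\|e_i(x)\|_{\ell^2(\Lambda^*)}\le 1$). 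Then $\mathrm{span}\{(\mathcal{Z}g_i)(x)\}_{i=1}^\ell=J(x)$ a.e., so $\{g_i\}_{i=1}^\ell$ generates $V^{\Lambda}(F)$ and $\rho(F,\Lambda)\le\ell$.

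The unitarity of $\mathcal{Z}$ and the Gram-matrix identity are routine; the only genuinely delicate point is the measurable selection of an orthonormal basis for the fibers $J(x)$ in the upper-bound construction, which is precisely where measurability of the dimension function and the range-function formalism are needed. Of course, if one simply invokes the known structure theory — Proposition~4.1 of \cite{TW}, or Bownik's results on the length of a shift-invariant space — the statement follows at once, and the argument above is only a reminder of why it holds.
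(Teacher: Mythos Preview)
The paper does not actually prove this proposition: it simply cites it as Proposition~4.1 in \cite{TW}. Your proposal supplies the standard fiberization/range-function argument underlying that reference (essentially the ``length of a shift-invariant space'' computation in Bownik \cite{B} and de Boor--DeVore--Ron \cite{DDR}), and you yourself note at the end that invoking \cite{TW} directly gives the statement at once. So there is nothing to compare: the paper outsources the proof, and your sketch is a correct outline of the argument one finds in the cited literature, with the measurable-selection step rightly flagged as the one nontrivial point.
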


The following theorems address properties of shift-invariant spaces $V^{\Lambda}(F)$ that are invariant under a lattice $\Gamma$ that is larger than $\Lambda$, see Theorem 2.1 and Theorem 3.2 in \cite{TW} and similar results in \cite{ACHKM} and \cite{ACP}.   For simplicity, we state the next two results for lattices $\Lambda, \Gamma$, but both
results remain true when $\Lambda \subset \Gamma$ are closed cocompact subgroups of $\R^d$.

\begin{theorem} \label{thm-rank-formula} 
Let $\Lambda, \Gamma \subset \R^d$ be lattices with $\Lambda \subset \Gamma$.
Let $R\subset \Gamma^*$ be a collection of representatives of the quotient $\Lambda^*/ \Gamma^*$ so that  
$$P_{\Lambda^*}(\widehat{F})(x)=\sum_{k \in R}  P_{\Gamma^*}(\widehat{F})(x+k), \ a.e.\  x\in \R^d.$$  
The space $V^\Lambda(F)$ is $\Gamma$-invariant if and only if 
$${\rm rank} \left[ P_{\Lambda^*}(\widehat{F})(x) \right]= \sum_{k \in R} {\rm rank} \left[ P_{\Gamma^*}(\widehat{F})(x+k) \right], \ a.e. \ x \in \R^d.$$
\end{theorem}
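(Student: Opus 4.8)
The plan is to use the fiberization (range-function) description of shift-invariant spaces, following \cite{B}. Fix a lattice $\Lambda$. Sending $f\in L^2(\R^d)$ to the vector-valued function $x\mapsto\big(\widehat{f}(x-\ell)\big)_{\ell\in\Lambda^*}$ is, up to normalization, a unitary identification of $L^2(\R^d)$ with $L^2\big(M_{\Lambda^*};\ell^2(\Lambda^*)\big)$ that carries $T_\lambda$ to pointwise modulation. Under it, closed $\Lambda$-invariant subspaces of $L^2(\R^d)$ correspond bijectively to measurable range functions $J_\Lambda\colon M_{\Lambda^*}\to\{\text{closed subspaces of }\ell^2(\Lambda^*)\}$; one such subspace is contained in another exactly when the range functions are nested a.e.; and for $V=V^\Lambda(F)$ the fiber $J_\Lambda(x)$ is the closed linear span of the $K$ vectors $\big(\widehat{f_k}(x-\ell)\big)_{\ell\in\Lambda^*}$, $1\le k\le K$, so that $\dim J_\Lambda(x)={\rm rank}\,P_{\Lambda^*}(\widehat{F})(x)$ since $P_{\Lambda^*}(\widehat{F})(x)$ is precisely the Gram matrix of those $K$ fiber vectors.

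Two reductions are immediate. First, the inequality ``$\le$'' in the asserted identity holds with no invariance hypothesis at all: since $P_{\Lambda^*}(\widehat{F})(x)=\sum_{k\in R}P_{\Gamma^*}(\widehat{F})(x+k)$ is a finite sum of positive semidefinite matrices, one has ${\rm range}\,P_{\Lambda^*}(\widehat{F})(x)=\sum_{k\in R}{\rm range}\,P_{\Gamma^*}(\widehat{F})(x+k)$, hence ${\rm rank}\,P_{\Lambda^*}(\widehat{F})(x)\le\sum_{k\in R}{\rm rank}\,P_{\Gamma^*}(\widehat{F})(x+k)$, with equality at a given $x$ precisely when this sum of subspaces is direct. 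Second, since $\Lambda\subseteq\Gamma$ one always has $V^\Lambda(F)\subseteq V^\Gamma(F)$; and if in addition $V^\Lambda(F)$ is $\Gamma$-invariant, it contains every $T_\gamma f_k$, hence contains --- and so equals --- $V^\Gamma(F)$, while the converse is trivial because $V^\Gamma(F)$ is $\Gamma$-invariant by construction. Thus $V^\Lambda(F)$ is $\Gamma$-invariant if and only if $V^\Lambda(F)=V^\Gamma(F)$, and since the inclusion $V^\Lambda(F)\subseteq V^\Gamma(F)$ always holds, this in turn holds if and only if the two $\Lambda$-range functions have equal dimension a.e. It therefore remains only to show that the $\Lambda$-range function of $V^\Gamma(F)$ has dimension $\sum_{k\in R}{\rm rank}\,P_{\Gamma^*}(\widehat{F})(x+k)$ for a.e.\ $x$.

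This computation is the heart of the matter. Let $R'$ be a set of coset representatives of $\Gamma/\Lambda$, so that $V^\Gamma(F)=V^\Lambda\big(\{T_\gamma f_k:\gamma\in R',\ 1\le k\le K\}\big)$ and its $\Lambda$-range function is the closed span of the $\Lambda$-fibers of the functions $T_\gamma f_k$. Split $\ell^2(\Lambda^*)$ into the $[\Gamma:\Lambda]$ blocks indexed by $R$ coming from the decomposition of $\Lambda^*$ into cosets of $\Gamma^*$. Using $\widehat{T_\gamma f_k}(\eta)=e^{-2\pi i\gamma\cdot\eta}\widehat{f_k}(\eta)$ together with $e^{2\pi i\gamma\cdot m}=1$ for $\gamma\in\Gamma$, $m\in\Gamma^*$, one checks that in this block decomposition the $\Lambda$-fiber of $T_\gamma f_k$ at $x$ is the family $\big(e^{-2\pi i\gamma\cdot(x+k)}\,(\Gamma\text{-fiber of }f_k\text{ at }x+k)\big)_{k\in R}$. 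The matrix $\big[e^{-2\pi i\gamma\cdot(x+k)}\big]_{\gamma\in R',\,k\in R}$ equals a nonsingular diagonal matrix (with diagonal entries $e^{-2\pi i\gamma\cdot x}$) times $\big[e^{-2\pi i\gamma\cdot k}\big]_{\gamma\in R',\,k\in R}$, and the latter is nonsingular because its rows are the distinct characters of the finite abelian group $\Gamma/\Lambda$ furnished by the perfect pairing $\Gamma/\Lambda\times\Lambda^*/\Gamma^*\to\mathbb{T}$, $(\gamma,k)\mapsto e^{-2\pi i\gamma\cdot k}$, which identifies $\Lambda^*/\Gamma^*$ with the dual group of $\Gamma/\Lambda$. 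Hence, for every $x$, the span over $\gamma\in R'$ of these block-supported families decouples blockwise, so the $\Lambda$-range function of $V^\Gamma(F)$ at $x$ equals $\bigoplus_{k\in R}\overline{\mathrm{span}}\{\Gamma\text{-fiber of }f_k\text{ at }x+k:\,1\le k\le K\}$, of dimension $\sum_{k\in R}{\rm rank}\,P_{\Gamma^*}(\widehat{F})(x+k)$. Together with the two reductions above, this gives that $V^\Lambda(F)$ is $\Gamma$-invariant if and only if ${\rm rank}\,P_{\Lambda^*}(\widehat{F})(x)=\sum_{k\in R}{\rm rank}\,P_{\Gamma^*}(\widehat{F})(x+k)$ for a.e.\ $x$.

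The main obstacle is the bookkeeping in this last step: matching the two fiberizations --- over $M_{\Lambda^*}$ and over $M_{\Gamma^*}$ --- through the coset-block decomposition of $\ell^2(\Lambda^*)$, keeping track of the shifts by elements of $R$ consistently with the identity recorded in the statement, and verifying that the range function produced for $V^\Gamma(F)$ is genuinely measurable so that the abstract correspondence of \cite{B} applies. The one properly algebraic ingredient --- nonsingularity of the character matrix of the perfect pairing $\Gamma/\Lambda\times\Lambda^*/\Gamma^*\to\mathbb{T}$ --- is clean and can be isolated as a short lemma.
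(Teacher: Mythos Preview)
The paper does not prove this theorem; it is quoted as background and attributed to \cite{TW} (Theorems 2.1 and 3.2) with related results in \cite{ACHKM,ACP}. Your argument via the range-function description of shift-invariant spaces is the standard one and is essentially what those references do. The three pieces --- the pointwise inequality from subadditivity of rank for sums of positive semidefinite matrices, the reduction of $\Gamma$-invariance to $V^\Lambda(F)=V^\Gamma(F)$, and the block decomposition of the $\Lambda$-fibers of $V^\Gamma(F)$ via the invertible character matrix of the pairing $\Gamma/\Lambda\times\Lambda^*/\Gamma^*\to\mathbb{T}$ --- are correct and fit together as you describe.

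One small bookkeeping point: the statement in the paper writes $R\subset\Gamma^*$, but since $\Lambda\subset\Gamma$ gives $\Gamma^*\subset\Lambda^*$, a set of representatives of $\Lambda^*/\Gamma^*$ lives in $\Lambda^*$, and the summation identity $P_{\Lambda^*}(\widehat F)(x)=\sum_{k\in R}P_{\Gamma^*}(\widehat F)(x+k)$ forces exactly that. Your proof implicitly uses the correct interpretation, so this does not affect the argument, but it is worth flagging when you write things up.
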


\begin{theorem} \label{thm-eig-bound}
Let $\Lambda, \Gamma \subset \R^d$ be lattices with $\Lambda \subset \Gamma$.
Suppose the space $V^\Lambda(F)$ is $\Gamma$-invariant and $\mathcal{T}^\Lambda(F)$ forms a frame for $V^\Lambda(F)$.  Then $\mathcal{T}^\Gamma(F)$ also forms a frame for $V^\Lambda(F)=V^\Gamma(F)$.  That is, there exists $t \ge 1$ such that for a.e. $x \in \R^d$, 
\[ t^{-1} P_{\Gamma^*}(\widehat{F})(x) \le (P_{\Gamma^*}(\widehat{F})(x))^2 \le tP_{\Gamma^*}(\widehat{F})(x) .\]
\end{theorem}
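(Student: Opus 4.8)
The plan is to translate the $\Gamma$-invariance hypothesis into the rank identity of Theorem \ref{thm-rank-formula}, and then combine this with the frame condition \eqref{frame-condition} for $P_{\Lambda^*}(\widehat F)$ to force each block $P_{\Gamma^*}(\widehat F)(x+k)$ to satisfy the same type of quadratic sandwich with a uniform constant. First I would fix a set $R\subset\Gamma^*$ of representatives for $\Lambda^*/\Gamma^*$, so that by Theorem \ref{thm-rank-formula} we have both the sum formula $P_{\Lambda^*}(\widehat F)(x)=\sum_{k\in R}P_{\Gamma^*}(\widehat F)(x+k)$ and the rank additivity ${\rm rank}\,[P_{\Lambda^*}(\widehat F)(x)]=\sum_{k\in R}{\rm rank}\,[P_{\Gamma^*}(\widehat F)(x+k)]$ a.e.\ in $x$. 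Since each $P_{\Gamma^*}(\widehat F)(x+k)$ is Hermitian positive semi-definite, the sum of their ranks equalling the rank of the sum is equivalent to their ranges being mutually orthogonal and in direct sum; equivalently, the nonzero eigenspaces of the various blocks are pairwise orthogonal. Hence after choosing an orthonormal eigenbasis of $\C^K$, the matrix $P_{\Lambda^*}(\widehat F)(x)$ is simultaneously block-diagonalized with the $k$-th block acting exactly as $P_{\Gamma^*}(\widehat F)(x+k)$ on its own range, and the spectrum of $P_{\Lambda^*}(\widehat F)(x)$ is the disjoint union (with multiplicity) of the spectra of the blocks.

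Next, I would invoke Theorem \ref{thm-eig-bound}'s hypothesis that $\mathcal T^\Lambda(F)$ is a frame for $V^\Lambda(F)$: by \eqref{frame-condition} there is $t\ge 1$ with $t^{-1}P_{\Lambda^*}(\widehat F)(x)\le (P_{\Lambda^*}(\widehat F)(x))^2\le tP_{\Lambda^*}(\widehat F)(x)$ a.e.\ on $M_{\Lambda^*}$, and since $P_{\Lambda^*}(\widehat F)$ is $\Lambda^*$-periodic this in fact holds a.e.\ on $\R^d$. Spectrally, \eqref{frame-condition} says exactly that every nonzero eigenvalue $\mu$ of $P_{\Lambda^*}(\widehat F)(x)$ satisfies $t^{-1}\le\mu\le t$ (the left inequality gives $t^{-1}\mu\le\mu^2$, i.e.\ $\mu\ge t^{-1}$ on the range; the right gives $\mu^2\le t\mu$, i.e.\ $\mu\le t$). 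By the direct-sum decomposition above, every nonzero eigenvalue of each block $P_{\Gamma^*}(\widehat F)(x+k)$ is a nonzero eigenvalue of $P_{\Lambda^*}(\widehat F)(x)$, hence lies in $[t^{-1},t]$. In particular, taking $k$ to be the representative $0\in R$, every nonzero eigenvalue of $P_{\Gamma^*}(\widehat F)(x)$ lies in $[t^{-1},t]$ for a.e.\ $x$, which is precisely the spectral reformulation of the desired sandwich $t^{-1}P_{\Gamma^*}(\widehat F)(x)\le (P_{\Gamma^*}(\widehat F)(x))^2\le tP_{\Gamma^*}(\widehat F)(x)$. Since $P_{\Gamma^*}(\widehat F)$ is $\Gamma^*$-periodic and this holds a.e.\ on a fundamental domain $M_{\Gamma^*}$ (indeed on all the translates $x+k$, $k\in R$, covering $M_{\Lambda^*}$), it holds a.e.\ on $\R^d$. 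This also identifies $\mathcal T^\Gamma(F)$ as a frame for $V^\Gamma(F)$, and the equality $V^\Lambda(F)=V^\Gamma(F)$ follows from $\Gamma$-invariance of $V^\Lambda(F)$ together with the obvious inclusion $V^\Gamma(F)\subset V^\Lambda(F)$.

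The main obstacle I anticipate is the measurability/a.e.\ bookkeeping in passing from the pointwise spectral statements to honest a.e.\ matrix inequalities on the correct fundamental domains: one must be careful that the representative set $R$, the null sets where the rank identity or the frame bound fail, and the periodicity of the various Gramians all interact correctly, and that "rank of a sum of PSD matrices equals sum of ranks $\iff$ ranges orthogonal" is applied with the right null-set caveats. A secondary technical point is to verify cleanly that the block-diagonalization is genuine—i.e.\ that orthogonality of ranges of PSD matrices forces $P_{\Lambda^*}(\widehat F)(x)$ restricted to ${\rm range}\,P_{\Gamma^*}(\widehat F)(x+k)$ to coincide with $P_{\Gamma^*}(\widehat F)(x+k)$, using that the other summands annihilate that subspace. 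Once these are in hand, the eigenvalue transfer and the final sandwich are immediate from the spectral calculus for Hermitian PSD matrices, with the same constant $t$ as in \eqref{frame-condition}.
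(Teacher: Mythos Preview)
The paper does not prove Theorem~\ref{thm-eig-bound}; it is quoted as a background result from \cite{TW}, so there is no in-paper proof to compare against. Evaluating your argument on its own merits: the overall strategy is sound, but one key step is incorrect. You assert that for Hermitian positive semi-definite matrices $P_k$, the rank identity ${\rm rank}\bigl(\sum_k P_k\bigr) = \sum_k {\rm rank}(P_k)$ forces the ranges of the $P_k$ to be mutually \emph{orthogonal}. This is false: rank additivity for PSD matrices is equivalent only to the ranges being in \emph{direct sum}, which is strictly weaker. For instance, $A = \left(\begin{smallmatrix} 1 & 0 \\ 0 & 0\end{smallmatrix}\right)$ and $B = \left(\begin{smallmatrix} 1 & 1 \\ 1 & 1\end{smallmatrix}\right)$ satisfy ${\rm rank}(A+B)={\rm rank}(A)+{\rm rank}(B)$ while their ranges are not orthogonal. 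Without orthogonality your simultaneous block-diagonalization fails, and the assertion that every nonzero eigenvalue of $P_{\Gamma^*}(\widehat F)(x+k)$ is literally an eigenvalue of $P_{\Lambda^*}(\widehat F)(x)$ is unjustified.

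The eigenvalue transfer you need is nevertheless true, via a different linear-algebra step. Write $P = P_{\Lambda^*}(\widehat F)(x)$, $P_k = P_{\Gamma^*}(\widehat F)(x+k)$, and $Q = P - P_k = \sum_{j\neq k} P_j$; rank additivity gives ${\rm rank}(Q) = {\rm rank}(P) - {\rm rank}(P_k) =: r - r_k$, hence $\lambda_{r-r_k+1}(Q) = 0$. Weyl's inequality $\lambda_{i+j-1}(P_k + Q) \le \lambda_i(P_k) + \lambda_j(Q)$ with $i = r_k$, $j = r - r_k + 1$ then yields $\lambda_r(P) \le \lambda_{r_k}(P_k)$, so the smallest nonzero eigenvalue of $P_k$ is at least that of $P$, hence $\ge t^{-1}$; the upper bound is immediate from $P_k \le P$. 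With this replacement (and with the ``obvious inclusion'' corrected to $V^\Lambda(F) \subset V^\Gamma(F)$, since $\Lambda\subset\Gamma$ enlarges the generating set of translates, while $\Gamma$-invariance gives the reverse containment), your plan goes through.
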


\section{Background: Fourier coefficients and Sobolev spaces} \label{fourier-sob-sec}

In this section we collect necessary background results and notation 
on Fourier coefficients and Sobolev spaces.

\vspace{.1in}
\subsection{Fourier coefficients}
Recall that the Fourier coefficients of $f \in L^2(\R^d/\Gamma)$ are defined by
$$\forall \xi \in \Gamma^*, \ \ \ \widehat{f}(\xi) = \int_{\R^d/\Gamma} f(x) e^{-2\pi i x \cdot \xi} \ dh(x) 
= \frac{1}{|{M}_{\Gamma}|} \int_{{M}_{\Gamma}} f(x) e^{-2\pi i x \cdot \xi} \ dx,$$
where $dx$ is Lebesgue measure and $dh$ is normalized Haar measure on the compact group $\R^d/\Gamma$.
Also recall Parseval's theorem
\begin{equation}
\int_{\R^d / \Gamma} |f(x)|^2 dh(x) = \frac{1}{|M_\Gamma|} \int_{{M}_\Gamma} |f(x)|^2dx = \sum_{\xi \in \Gamma^*} |\widehat{f}(\xi)|^2,
\end{equation}
and the translation property
\begin{equation}
\forall y \in \R^d, \forall \xi \in \Gamma^*, \ \ \ \widehat{T_y f}(\xi) = \widehat{f}(\xi) e^{-2\pi i y \cdot \xi}.
\end{equation}

\vspace{.1in}
\subsection{Sobolev spaces}

Given $s>0$, the Sobolev space ${H}^s(\R^d)$ consists of all measurable functions $f$ defined on $\R^d$ such that
$\|f\|_{H^s(\R^d)} =  \left( \int_{\R^d} (1+ |\xi|^{2})^s |\widehat{f}(\xi)|^2 d \xi \right)^{1/2} < \infty.$
Equivalently, $f\in H^s(\R^d)$ if and only if $f \in L^2(\R^d)$  and
\begin{equation} \label{homog-norm}
\| f \|_{\dt{H}^{s}(\R^d)} = \left( \int_{\R^d} |\xi|^{2s} |\widehat{f}(\xi)|^2 d \xi \right)^{1/2} < \infty.
\end{equation}
Recall the following equivalent characterization of \eqref{homog-norm} when $0<s<1$, e.g., \cite{S},
\begin{equation} \label{Rd-equiv-sob-norm}
\| f\|^2_{\dt{H}^{s}(\R^d)} = C(d,s) \int_{\R^d} \int_{\R^d} \frac{|f(x+y) - f(x)|^2}{|y|^{d+2s}} dx dy.
\end{equation}

\vspace{.1in}

\subsection{Sobolev spaces of periodic functions} \label{sob-torus-sec}

We shall also need some background on Sobolev spaces of periodic functions.  

\begin{definition}[Sobolev spaces on the torus]
Let $\Gamma \subset \R^d$ be a lattice with dual lattice $\Gamma^* \subset \R^d$.  
Given $s>0$, define the {Sobolev space} ${H}^s(\R^d/\Gamma) = \{ f \in L^2(\R^d/\Gamma) : \|f\|_{\dt{H}^s(\R^d/\Gamma)} < \infty \}$,
where
$\|f\|_{\dt{H}^s(\R^d/\Gamma)} = \left(  \sum_{\xi \in \Gamma^*} |\xi|^{2s} | \widehat{f}(\xi)|^2 \right)^{1/2}.$
\end{definition}

The following proposition gives a useful equivalent characterization of $\|f\|_{\dt{H}^s(\R^d/\Gamma)}$ for $0<s<1$.  Equation \eqref{eq-sob-norm1} is a version for $H^s(\R^d/\Gamma)$ of Proposition 1.3 in \cite{BO}, and
equation \eqref{eq-sob-norm2} is an extension to $H^s(\R^d/\Gamma)$ of the equivalence on page 66 in \cite{BBM}.
We use the notation $X \asymp Y$ to indicate that there exist absolute
constants $0<C_1\leq C_2$ such that $C_1 X \leq Y \leq C_2 X$.
\begin{lemma}\label{prop-equiv-norm}
Fix $0<s<1$, let $\Gamma \subset \R^d$ be lattice, and suppose that $f \in L^2(\R^d/\Gamma)$.
Then
\begin{equation}\label{eq-sob-norm1}
\|f\|^2_{\dt{H}^s(\R^d/\Gamma)} \asymp \int_{{M}_\Gamma} \int_{{M}_\Gamma} \frac{|f(x+y)-f(x)|^2}{|y|^{d+2s}} dx dy.
\end{equation}
Moreover, if $\{a_j\}_{j=1}^d \subset \Gamma$ is a basis for $\Gamma$  then
\begin{equation} \label{eq-sob-norm2}
\|f\|^2_{\dt{H}^s(\R^d/\Gamma)} \asymp \sum_{j=1}^d \int_{[-\frac{1}{2}, \frac{1}{2})}  \int_{{M}_\Gamma}\frac{|f(x+t a_j)-f(x)|^2}{|t|^{1+2s}} dx dt. 
\end{equation}
The implicit constants in 
\eqref{eq-sob-norm1} and 
\eqref{eq-sob-norm2} depend on $s,d,\Gamma$.
\end{lemma}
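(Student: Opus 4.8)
The plan is to reduce both equivalences to the known characterization on $\R^d$ given in \eqref{Rd-equiv-sob-norm}, transferred carefully to the torus. First I would establish \eqref{eq-sob-norm1}. The Fourier side identity $\|f\|^2_{\dt H^s(\R^d/\Gamma)} = \sum_{\xi\in\Gamma^*}|\xi|^{2s}|\widehat f(\xi)|^2$ is the definition, so the task is to match this with the double integral. Expanding $f$ in its Fourier series $f(x)=\sum_{\xi\in\Gamma^*}\widehat f(\xi)e^{2\pi i x\cdot\xi}$ and plugging into $\int_{M_\Gamma}\int_{M_\Gamma}\frac{|f(x+y)-f(x)|^2}{|y|^{d+2s}}\,dx\,dy$, the inner integral in $x$ over the fundamental domain $M_\Gamma$ kills cross terms by orthogonality of the characters, leaving $|M_\Gamma|\sum_{\xi\in\Gamma^*}|\widehat f(\xi)|^2\int_{M_\Gamma}\frac{|e^{2\pi i y\cdot\xi}-1|^2}{|y|^{d+2s}}\,dy$. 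So it suffices to show the kernel factor $K(\xi):=\int_{M_\Gamma}\frac{|e^{2\pi i y\cdot\xi}-1|^2}{|y|^{d+2s}}\,dy$ satisfies $K(\xi)\asymp |\xi|^{2s}$ uniformly over $\xi\in\Gamma^*\setminus\{0\}$ (and $K(0)=0$ matches the $\xi=0$ term trivially). For the upper bound, $|e^{2\pi i y\cdot\xi}-1|^2\le \min(4,4\pi^2|y|^2|\xi|^2)$, and splitting the integral over $M_\Gamma$ at $|y|\sim 1/|\xi|$ gives $K(\xi)\lesssim |\xi|^{2s}$ when $|\xi|\gtrsim 1$; since $\Gamma^*$ is a lattice, $|\xi|$ is bounded below on $\Gamma^*\setminus\{0\}$, so the region $|\xi|\lesssim 1$ involves only finitely many cosets worth of behavior and the bound $K(\xi)\le 4\pi^2|\xi|^2\int_{M_\Gamma}|y|^{2-d-2s}\,dy$ handles it (the integral converges since $2-2s>0$). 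For the lower bound one restricts the integral to a ball $B(0,c/|\xi|)\subset M_\Gamma$ on which $|e^{2\pi i y\cdot\xi}-1|\gtrsim |y||\xi|$, producing $K(\xi)\gtrsim|\xi|^2\int_{|y|<c/|\xi|}|y|^{2-d-2s}\,dy\asymp|\xi|^{2s}$; again the lower bound on $|\xi|$ over the lattice guarantees the ball fits inside $M_\Gamma$ with a uniform constant. This yields \eqref{eq-sob-norm1}.

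For \eqref{eq-sob-norm2} I would run the same Fourier-expansion computation. Writing $\xi=(A^*)^{-1}z$ for $z\in\Z^d$ when $\Gamma=A\Z^d$ and $a_j$ the $j$th column of $A$, one has $a_j\cdot\xi = z_j$, so $f(x+ta_j)-f(x) = \sum_\xi \widehat f(\xi)(e^{2\pi i t\, a_j\cdot\xi}-1)e^{2\pi i x\cdot\xi}$, and integrating $|{\cdot}|^2$ over $x\in M_\Gamma$ and $t\in[-\tfrac12,\tfrac12)$ against $|t|^{-1-2s}$ gives, by orthogonality, $|M_\Gamma|\sum_{\xi\in\Gamma^*}|\widehat f(\xi)|^2\sum_{j=1}^d\int_{-1/2}^{1/2}\frac{|e^{2\pi i t\,a_j\cdot\xi}-1|^2}{|t|^{1+2s}}\,dt$. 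So it suffices to show $\sum_{j=1}^d\int_{-1/2}^{1/2}\frac{|e^{2\pi i t\, z_j}-1|^2}{|t|^{1+2s}}\,dt \asymp |\xi|^{2s}$ for $z=A^*\xi\in\Z^d\setminus\{0\}$. Since $|z|\asymp|\xi|$ (with constants depending on $A$, hence on $\Gamma$), and for an integer $n$, $g(n):=\int_{-1/2}^{1/2}\frac{|e^{2\pi i t n}-1|^2}{|t|^{1+2s}}\,dt$ satisfies $g(n)\asymp |n|^{2s}$ for $n\ne 0$ (upper bound by the same $\min(4,4\pi^2 t^2 n^2)$ split at $|t|\sim 1/|n|$, valid since $1/|n|\le 1$; lower bound by restricting to $|t|<1/(4|n|)$ where $|e^{2\pi i tn}-1|\gtrsim |t||n|$), we get $\sum_j g(z_j)\asymp \sum_j|z_j|^{2s}\asymp |z|^{2s}\asymp|\xi|^{2s}$, using that $\sum_j|z_j|^{2s}$ and $(\sum_j|z_j|^2)^s$ are equivalent on $\R^d$ up to dimensional constants. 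This gives \eqref{eq-sob-norm2}.

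The main obstacle is bookkeeping the uniformity of the kernel estimates: one must be careful that the asymptotics $K(\xi)\asymp|\xi|^{2s}$ and $g(n)\asymp|n|^{2s}$ hold \emph{uniformly} down to the smallest nonzero lattice vectors, which is where the geometry of $M_\Gamma$ (that it contains a ball of a fixed radius and is contained in a ball of fixed radius, and that $\Gamma^*\setminus\{0\}$ is uniformly separated from $0$) enters and forces the implicit constants to depend on $\Gamma$. I would also note that for $s\in(0,1)$ the exponents $2-d-2s$ (in dimension-$d$ radial integrals) and $-1-2s$ (in one-dimensional integrals near $t=0$, after the cancellation $|e^{2\pi i tn}-1|^2\lesssim t^2$) give convergent integrals precisely in the claimed range, so no endpoint issues arise. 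The remaining details are the routine splitting-of-integrals computations sketched above, together with citing \cite{BO, BBM} for the (standard) statements from which \eqref{eq-sob-norm1} and \eqref{eq-sob-norm2} are adapted; since the Appendix is cited for the torus Sobolev background, the complete argument can be deferred there.
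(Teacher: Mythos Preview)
Your proposal is correct and follows essentially the same route as the paper: expand via Parseval to reduce each equivalence to a kernel estimate $G(\xi)\asymp|\xi|^{2s}$ (respectively $H(\xi)\asymp|\xi|^{2s}$) for $\xi\in\Gamma^*\setminus\{0\}$, and verify these by elementary estimates together with the facts that $M_\Gamma$ contains a fixed ball and $|\xi|$ is bounded below on $\Gamma^*\setminus\{0\}$. The paper organizes the kernel bound by first rescaling $y\mapsto y/|\xi|$ and then invoking rotation invariance, whereas you split directly at $|y|\sim 1/|\xi|$; these are equivalent manipulations.

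One small slip to fix: your pointwise claim $|e^{2\pi i y\cdot\xi}-1|\gtrsim|y|\,|\xi|$ on $B(0,c/|\xi|)$ is false (take $y\perp\xi$); the correct bound there is $|e^{2\pi i y\cdot\xi}-1|\asymp|y\cdot\xi|$. Your integral lower bound $K(\xi)\gtrsim|\xi|^{2s}$ still follows once you replace $|y|^2|\xi|^2$ by $|y\cdot\xi|^2$ and use the symmetry identity $\int_{|y|<R}|y\cdot\xi'|^2\,|y|^{-d-2s}\,dy=\tfrac{1}{d}\int_{|y|<R}|y|^{2-d-2s}\,dy$, which is exactly the rotation-invariance step the paper uses after rescaling.
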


The proof of Lemma \ref{prop-equiv-norm} is included in the Appendix.

\section{Proofs of the main theorems}  \label{main-proofs-big-section}

This section gives proofs of our main results, Theorems \ref{sharp-blt} and \ref{frame-not-riesz}.
We have chosen to organize the proofs into digestible sections of preparatory technical results.
In particular, in Section \ref{sob-gram-sec} we prove a necessary Sobolev embedding for bracket products,
and in Section \ref{char-Hhalf-sec} we prove a crucial lemma on the rank of $H^{1/2}$-valued matrix functions.
Finally, in Section \ref{combining-sec} we combine the preparatory results and prove Theorems \ref{sharp-blt} and \ref{frame-not-riesz}.

\subsection{A Sobolev embedding for bracket products} \label{sob-gram-sec}
Given a lattice $\Lambda \subset \R^d$ and $f,g \in L^2(\R^d)$, it will be convenient to define the {\em bracket product} of $g,h$ by
$$[f,g](x) = [f,g]_{\Lambda}(x)=\sum_{\lambda \in \Lambda} f(x-\lambda) \overline{g(x-\lambda)}.$$
For background on bracket products and their connection to shift-invariant spaces see, for example, \cite{CL, HSWW}.

\begin{lemma} \label{thm-embedding}
Let $0<s<1$.  If $g,h \in H^s(\R^d)$ and $P_{\Lambda}(g), P_{\Lambda}(h) \in L^{\infty}(\R^d /\Lambda)$ then
\begin{equation} \label{brack-norm-ineq}
\bigl\| [g,h] \bigr\|_{\dt{H}^s(\R^d / \Lambda)}^2 \le 2 C(s,d) \left( \|P_\Lambda(g)\|_{L^{\infty}(\R^d /\Lambda)} \|h\|_{\dt{H}^s(\R^d)}^2 + \|P_\Lambda(h)\|_{L^{\infty}(\R^d /\Lambda)} \|g\|_{\dt{H}^s(\R^d)}^2 \right),
\end{equation}
where $C(s,d)$ is the constant in \eqref{Rd-equiv-sob-norm}.  In particular, $[g,h]_{\Lambda} \in H^s(\R^d / \Lambda)$.  
\end{lemma}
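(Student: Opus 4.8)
The strategy is to reduce the bracket-product estimate to the known equivalent characterization \eqref{eq-sob-norm1} of the homogeneous Sobolev norm on $\R^d/\Lambda$, and then to control the relevant difference quotient of $[g,h]$ by difference quotients of $g$ and $h$ separately. First I would write, for $x \in M_\Lambda$ and $y \in M_\Lambda$,
\[
[g,h](x+y) - [g,h](x) = \sum_{\lambda \in \Lambda} \Bigl( g(x+y-\lambda)\overline{h(x+y-\lambda)} - g(x-\lambda)\overline{h(x-\lambda)} \Bigr),
\]
and insert the standard ``add and subtract'' decomposition inside each summand:
\[
g(u+y)\overline{h(u+y)} - g(u)\overline{h(u)} = \bigl( g(u+y) - g(u) \bigr)\overline{h(u+y)} + g(u)\bigl( \overline{h(u+y)} - \overline{h(u)} \bigr),
\]
with $u = x-\lambda$. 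Summing over $\lambda$, applying the triangle inequality in $\ell^2(\Lambda)$ (or just $(a+b)^2 \le 2a^2 + 2b^2$), and then Cauchy–Schwarz in $\lambda$ against the pointwise bounds $\sum_\lambda |h(u+y)|^2 = P_\Lambda(h)(x+y) \le \|P_\Lambda(h)\|_\infty$ and $\sum_\lambda |g(u)|^2 = P_\Lambda(g)(x) \le \|P_\Lambda(g)\|_\infty$, I get
\[
\bigl| [g,h](x+y) - [g,h](x) \bigr|^2 \le 2\|P_\Lambda(h)\|_\infty \sum_{\lambda} |g(x+y-\lambda) - g(x-\lambda)|^2 + 2\|P_\Lambda(g)\|_\infty \sum_{\lambda} |h(x+y-\lambda) - h(x-\lambda)|^2.
\]

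Next I would divide by $|y|^{d+2s}$ and integrate over $x \in M_\Lambda$ and $y \in M_\Lambda$. Using \eqref{eq-sob-norm1}, the left side is $\asymp \|[g,h]\|_{\dt H^s(\R^d/\Lambda)}^2$. For the right side, in each term I would interchange the order of summation and integration (Tonelli, everything nonnegative) and, for fixed $\lambda$, substitute $x \mapsto x + \lambda$ so that $x$ ranges over $M_\Lambda - \lambda$; since $\Lambda$ tiles $\R^d$ by the fundamental domain $M_\Lambda$, summing over $\lambda \in \Lambda$ turns $\int_{M_\Lambda}$ into $\int_{\R^d}$ in the $x$-variable. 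This gives
\[
\sum_\lambda \int_{M_\Lambda} \int_{M_\Lambda} \frac{|g(x+y-\lambda) - g(x-\lambda)|^2}{|y|^{d+2s}}\, dy\, dx = \int_{\R^d} \int_{M_\Lambda} \frac{|g(x+y) - g(x)|^2}{|y|^{d+2s}}\, dy\, dx \le \int_{\R^d}\int_{\R^d} \frac{|g(x+y)-g(x)|^2}{|y|^{d+2s}}\, dy\, dx,
\]
which by \eqref{Rd-equiv-sob-norm} equals $C(s,d)^{-1}\|g\|_{\dt H^s(\R^d)}^2$ (and similarly for $h$). Combining with the equivalence constant from \eqref{eq-sob-norm1} and tracking the factor of $2$ and the constant $C(s,d)$ yields \eqref{brack-norm-ineq}. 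Finally, since $[g,h] \in L^2(\R^d/\Lambda)$ automatically (its $L^2$ norm is dominated by $\|P_\Lambda(g)\|_\infty^{1/2}\|P_\Lambda(h)\|_\infty^{1/2}$ up to the measure of $M_\Lambda$, or one can argue directly), finiteness of the homogeneous seminorm gives $[g,h] \in H^s(\R^d/\Lambda)$.

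\textbf{Main obstacle.} The computational heart is routine, but the one point requiring care is the ``unfolding'' step: checking that summing over translates $\lambda \in \Lambda$ of the integral over $M_\Lambda \times M_\Lambda$ genuinely produces $\int_{\R^d}\int_{M_\Lambda}$, i.e., that the $y$-variable's domain stays $M_\Lambda$ while only the $x$-variable opens up to all of $\R^d$. This is where it matters that $M_\Lambda$ is a genuine fundamental domain (up to measure zero it tiles $\R^d$ under $\Lambda$-translates), and that in the decomposition above the difference argument $x+y-\lambda$ versus $x-\lambda$ shares the \emph{same} shift $\lambda$, so the substitution $x \mapsto x+\lambda$ is harmless. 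A secondary subtlety is matching the constant in \eqref{brack-norm-ineq}: as stated it uses exactly $C(s,d)$ from \eqref{Rd-equiv-sob-norm} with a clean factor $2$, so the bound from \eqref{eq-sob-norm1} must be applied in the favorable direction, which is legitimate since $\asymp$ gives inequalities in both directions. Everything else --- the pointwise Cauchy–Schwarz estimate and the $(a+b)^2 \le 2a^2+2b^2$ split --- is standard.
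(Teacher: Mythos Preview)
Your proposal is correct and follows essentially the same route as the paper's proof: the same add-and-subtract decomposition of $g(u+y)\overline{h(u+y)}-g(u)\overline{h(u)}$, the same Cauchy--Schwarz in $\lambda$ against the periodizations $P_\Lambda(g)$ and $P_\Lambda(h)$, and the same unfolding of $\sum_{\lambda}\int_{M_\Lambda}$ into $\int_{\R^d}$ followed by the enlargement $\int_{M_\Lambda}\to\int_{\R^d}$ in the $y$-variable and an appeal to \eqref{Rd-equiv-sob-norm}. The only cosmetic difference is that the paper groups the cross term as $g(x+y-\lambda)\bigl(\overline{h(x+y-\lambda)}-\overline{h(x-\lambda)}\bigr)+\overline{h(x-\lambda)}\bigl(g(x+y-\lambda)-g(x-\lambda)\bigr)$, the mirror image of your choice, which merely swaps which periodization is evaluated at $x$ versus $x+y$; your observation about the constant-tracking subtlety with \eqref{eq-sob-norm1} is apt and the paper glosses over it in the same way.
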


\begin{proof} 
Note that $P_{\Lambda}(g), P_{\Lambda}(h) \in L^{\infty}(\R^d /\Lambda)$ imply that $[g,h] \in L^{\infty}(\R^d /\Lambda) \subset L^2(\R^d /\Lambda).$
So, by Lemma \ref{prop-equiv-norm}, it suffices to show that 
\[\int_{M_{\Lambda}} \int_{M_{\Lambda}} \frac{\bigl| [g,h](x+y)-[g,h](x) \bigr|^2}{|y|^{d+2s}} dy dx <\infty.\]
We have,
\begin{align}
	\int_{M_{\Lambda}} & \int_{M_{\Lambda}} \frac{\bigl|[g,h](x+y)-[g,h](x) \bigr|^2}{|y|^{d+2s}} dy dx \notag \\
	&\le\int_{M_{\Lambda}}  \int_{M_{\Lambda}} \frac{ (\sum_{\lambda \in \Lambda} |g(x+y-\lambda)\overline{h(x+y-\lambda)}- g(x-\lambda)\overline{h(x-\lambda)}|)^2}{|y|^{d+2s}} dy dx \notag \\
	&\le 2\int_{M_{\Lambda}}  \int_{M_{\Lambda}} \frac{ (\sum_{\lambda \in \Lambda} |g(x+y-\lambda)||{h(x+y-\lambda)}-h(x-\lambda)|)^2}{ |y|^{d+2s}}dy dx \label{brack-eq1}\\
	&+ 2\int_{M_{\Lambda}}  \int_{M_{\Lambda}} \frac{(\sum_{\lambda \in \Lambda}|{h(x-\lambda)}||g(x+y-\lambda)-g(x-\lambda)|)^2}{|y|^{d+2s}} dy dx. \label{brack-eq2}
\end{align}

Using \eqref{Rd-equiv-sob-norm}, the expression in \eqref{brack-eq1} can be bounded as follows,
\begin{align*}
	\int_{M_{\Lambda}}  &\int_{M_{\Lambda}} \frac{ (\sum_{\lambda \in \Lambda} |g(x+y-\lambda)||{h}	(x+y-\lambda)-{h}(x-\lambda)|)^2}{ |y|^{d+2s}}dydx \\
	&\le \int_{M_{\Lambda}}  \int_{M_{\Lambda}}\frac{ \sum_{\lambda \in \Lambda} |g(x+y-\lambda)|^2\sum_{\lambda \in \Lambda}|{h}(x+y-\lambda)-{h}(x-\lambda)|^2}{ |y|^{d+2s}} dy dx\\
	& \le \|P_\Lambda(g)\|_{L^\infty(\R^d / \Lambda)} \int_{M_{\Lambda}}  \int_{M_{\Lambda}}\frac{\sum_{\lambda \in \Lambda}|{h}(x+y-\lambda)-{h}(x-\lambda)|^2}{ |y|^{d+2s}} dy dx\\
	& \le \|P_\Lambda(g)\|_{L^\infty(\R^d / \Lambda)} \int_{\R^d}  \int_{M_{\Lambda}}\frac{|h(x+y)-h(x)|^2}{ |y|^{d+2s}} dy dx\\
	& \le C(d,s)\|P_\Lambda(g)\|_{L^\infty(\R^d / \Lambda)} \|h\|_{\dt{H}^s(\R^d)}^2.
\end{align*}
This, together with a similar bound for \eqref{brack-eq2}, gives \eqref{brack-norm-ineq}.
\end{proof}

\subsection{Rank constraints for $H^{1/2}$-valued matrices on the torus} \label{char-Hhalf-sec}

In this section, we prove the following technical lemma which is crucially needed in the proofs of our main theorems.

\begin{lemma} \label{lem-constant-rank}
Let $\Lambda \subset \R^d$ be a lattice.  
For almost every $x$, let $P(x)$ be a Hermitian positive semi-definite $n \times n$ matrix with entries $p_{i,j} \in H^{1/2}(\R^d/\Lambda)$ for all $i,j \in \{1,...,n\}$. If, for some $t>0$, $P$ satisfies the condition 
\begin{equation}
	t P(x) \le P(x)^2, \indent \text{a.e. } x \in \R^d/\Lambda, \label{eq-frame-cond}
\end{equation}
then the rank of $P(x)$ is constant a.e.
\end{lemma}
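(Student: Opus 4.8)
The condition $tP(x) \le P(x)^2$ means that at a.e.\ $x$, every nonzero eigenvalue $\mu$ of $P(x)$ satisfies $\mu \ge t$; equivalently, the spectrum of $P(x)$ is contained in $\{0\} \cup [t,\infty)$. Consequently the rank of $P(x)$ equals the number of eigenvalues at least $t$, which can be read off from the spectral projection $E(x) := \mathbf{1}_{[t/2,\infty)}(P(x))$: indeed $\mathrm{rank}\,P(x) = \mathrm{tr}\,E(x)$ a.e. The natural strategy is to show that $x \mapsto \mathrm{tr}\,E(x) = \mathrm{rank}\,P(x)$ is (a.e.\ equal to) a function with vanishing distributional (or finite-difference) gradient on the torus $\R^d/\Lambda$, hence constant; one extracts this from the fact that the entries $p_{i,j}$ lie in $H^{1/2}$.

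First I would make the spectral gap quantitative and local. Fix a small radius $r$ with $0 < r < t/2$, and consider the holomorphic functional calculus: let $\Phi(z) = \frac{1}{2\pi i}\oint_{|w-0|=r} (w - z)^{-1}\,dw$, so that for any Hermitian matrix $A$ whose spectrum avoids the circle $|w| = r$, $\Phi(A)$ is the spectral projection onto eigenvalues of modulus $< r$; then $n - \mathrm{tr}\,\Phi(A) = \mathrm{rank}\,\mathbf{1}_{[r,\infty)}(A)$. Since the spectrum of $P(x)$ lies in $\{0\}\cup[t,\infty)$ and $r < t/2 < t$, we have $\mathrm{rank}\,P(x) = n - \mathrm{tr}\,\Phi(P(x))$ a.e. The map $A \mapsto \Phi(A)$, restricted to Hermitian matrices with spectrum in $\{0\}\cup[t,\infty)$, is Lipschitz (with constant depending only on $r$, $t$, $n$), because the resolvent $(w - A)^{-1}$ is uniformly bounded on $|w| = r$ on this set. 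Also $P \in L^\infty(\R^d/\Lambda)$ entrywise: the diagonal entries of a positive semidefinite matrix bound the off-diagonal ones, and each $p_{i,i}$, being the nonnegative $H^{1/2}$ function whose $\dt H^{1/2}$ seminorm is finite, together with its $L^2$-membership, in fact $P \le \|P\| I$ with $\|P\|$ controlled by $\sum_i p_{i,i} \in L^\infty$ — here I would note the relevant boundedness follows from the hypotheses (if one wants, invoke that $p_{i,i} \in H^{1/2}(\R^d/\Lambda) \subset L^\infty$ fails in general, so instead argue $P(x)^2 \le \|P(x)\|\,P(x)$ is automatic and combine with $tP \le P^2$ only to locate eigenvalues; the $L^\infty$ bound on $P$ itself I would establish from $p_{i,j} \in H^{1/2}$ via the chain rule estimate below applied to the trace, or simply add it as a harmless consequence of the frame-condition context in which this lemma is used).

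The core estimate is then: $x \mapsto \mathrm{tr}\,\Phi(P(x))$ belongs to $\dt H^{1/2}(\R^d/\Lambda)$. Using the characterization \eqref{eq-sob-norm1},
\[
\|\mathrm{tr}\,\Phi(P(\cdot))\|_{\dt H^{1/2}(\R^d/\Lambda)}^2 \asymp \int_{M_\Lambda}\int_{M_\Lambda} \frac{|\mathrm{tr}\,\Phi(P(x+y)) - \mathrm{tr}\,\Phi(P(x))|^2}{|y|^{d+1}}\,dy\,dx,
\]
and by the Lipschitz bound on $\Phi$ the numerator is $\lesssim \sum_{i,j}|p_{i,j}(x+y) - p_{i,j}(x)|^2$, so the double integral is $\lesssim \sum_{i,j}\|p_{i,j}\|_{\dt H^{1/2}(\R^d/\Lambda)}^2 < \infty$. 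Hence $g := \mathrm{tr}\,\Phi(P(\cdot)) \in \dt H^{1/2}(\R^d/\Lambda)$. But $g$ is integer-valued a.e.\ (it is $n - \mathrm{rank}\,P(x) \in \{0,1,\dots,n\}$). An integer-valued function on the torus with finite $\dt H^{1/2}$ seminorm must be constant a.e.: any nonconstant integer-valued function has a jump of size $\ge 1$ across a positive-measure set of the torus, which forces the Gagliardo-type double integral to diverge (equivalently, its Fourier coefficients at nonzero frequencies cannot all vanish yet the $|\xi|\,$-weighted $\ell^2$ sum is finite only if $g$'s level sets are "thick", and for integer-valued functions the co-area / isoperimetric inequality on the torus shows a finite $\dt H^{1/2}$ seminorm with a genuine jump is impossible). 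I would state this last fact as a short standalone claim and prove it by the Gagliardo double-integral lower bound: if $g$ takes two values on sets $A$, $B$ of positive measure, then restricting the inner integral to $y$ with $x \in A$, $x+y \in B$ gives a contribution bounded below by $c\,|A\cap(B-y)|$ integrated over a set of $y$ of positive measure near $0$ — more carefully, one uses that $\int_{\{|y|<\delta\}} \mathbf{1}_B(x+y)\,dy$ cannot be small for $x$ in a positive-measure subset of $A$ by the Lebesgue density theorem, yielding divergence of $\int |y|^{-d-1}$ near $0$. This gives $g \equiv c$, hence $\mathrm{rank}\,P(x) = n - c$ a.e., as claimed.

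**Main obstacle.** The delicate point is the final step — that a $\{0,1,\dots,n\}$-valued function in $\dt H^{1/2}(\R^d/\Lambda)$ is a.e.\ constant. This is a fractional-Sobolev analogue of the statement that a locally constant (or BV with purely jump derivative) function has no nontrivial integer-valued representatives in $H^{1/2}$; the clean way is the Lebesgue-density argument on the Gagliardo integral sketched above, and I would be careful to get the quantitative lower bound right (the exponent $d+1 = d + 2\cdot\tfrac12$ is exactly borderline, so the divergence near $y=0$ is logarithmic-type and must be handled by fixing a small ball of $y$'s on which $|A \cap (B-y)|$ stays bounded below). A secondary, more bookkeeping-level obstacle is securing the entrywise $L^\infty$ bound on $P$ needed for the Lipschitz estimate on $\Phi$ to be uniform; in the applications this lemma is invoked alongside the frame condition \eqref{frame-condition}, from which $P = P_{\Lambda^*}(\widehat F)$ is automatically bounded, so one could alternatively add "$P \in L^\infty$ entrywise" to the hypotheses without weakening the downstream use — but I would prefer to derive it, noting $P(x)^2 \le tP(x)$ from \eqref{eq-frame-cond}'s companion inequality is not assumed here, so I'd instead bound $\|P(x)\|$ via $\mathrm{tr}\,P(x) = \sum_i p_{i,i}(x)$ and observe $\sum_i p_{i,i} \in H^{1/2}(\R^d/\Lambda)$, then run the whole functional-calculus argument with $r$ replaced by a cutoff depending on a fixed large constant $M$ on the set $\{\mathrm{tr}\,P \le M\}$ and let $M \to \infty$, since $\{\mathrm{tr}\,P = \infty\}$ has measure zero.
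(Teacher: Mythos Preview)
Your overall strategy is essentially the paper's: exploit the spectral gap to produce a function in $H^{1/2}(\R^d/\Lambda)$ taking values in a discrete set, then argue such a function must be a.e.\ constant. The paper works with each ordered eigenvalue $\lambda_k(x)$ separately (shown to lie in $H^{1/2}$ via the Courant--Fischer min-max inequality, which gives $|\lambda_k(\eta)-\lambda_k(\xi)|\le \|P(\eta)-P(\xi)\|_{\rm Frob}$), while you package the same information into the single integer-valued function $\mathrm{tr}\,\Phi(P(\cdot))=n-\mathrm{rank}\,P(\cdot)$ via holomorphic functional calculus. Both routes reduce to the same core lemma: a function in $H^{1/2}$ of the torus that is either $0$ or bounded away from $0$ (equivalently, an integer-valued $H^{1/2}$ function) is a.e.\ constant.

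Two remarks on the execution. First, your worry about needing $P\in L^\infty$ is unnecessary: for $|w|=r<t$ and any Hermitian $A$ with spectrum in $\{0\}\cup[t,\infty)$, one has $\|(w-A)^{-1}\|\le 1/\min(r,t-r)$, independent of $\|A\|$. So the Lipschitz bound on $\Phi$ is uniform and your Gagliardo estimate for $\mathrm{tr}\,\Phi(P(\cdot))$ goes through without any boundedness assumption on $P$.

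Second, and more seriously, your argument for the core lemma has a genuine gap. You claim that for disjoint sets $A,B$ of positive measure, Lebesgue density gives a positive-measure subset of $A$ on which $\int_{|y|<\delta}\mathbf 1_B(x+y)\,dy$ is \emph{not small}. In fact Lebesgue density gives precisely the opposite: for a.e.\ $x\in A$ this integral is $o(\delta^d)$. Equivalently, $|A\cap(B-y)|\to 0$ as $y\to 0$, so there is no uniform lower bound on a ball of $y$'s near the origin, and the divergence of $\int |y|^{-d-1}\,dy$ cannot be harvested this way. The critical exponent $s=1/2$ is exactly borderline, and a direct measure-theoretic lower bound of the type you sketch does not work. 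The paper handles this (its Lemma~\ref{lem-char-fun}) by a VMO-type argument in dimension one --- showing that $g\in H^{1/2}(\R/\alpha\Z)$ forces the averaged oscillation $|I|^{-2}\iint_{I\times I}|g(x)-g(y)|\,dx\,dy\to 0$ as $|I|\to 0$, while a nontrivial characteristic function admits arbitrarily small intervals on which this quantity is $\ge 1/4$ --- and then reduces $d\ge 2$ to $d=1$ by slicing along lattice directions using the equivalent norm~\eqref{eq-sob-norm2}. You should replace your density sketch with an argument of this kind.
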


The proof of Lemma \ref{lem-constant-rank} requires two additional preparatory results, Lemmas \ref{lem-eigenvalues} and \ref{lem-char-fun}.  
We first state and prove these preparatory lemmas, and then use them to prove Lemma \ref{lem-constant-rank} at the end of this section.  

\begin{lemma} \label{lem-eigenvalues}
Let $0<s<1$.  
Let $\Lambda \subset \R^d$ be a lattice.  
For almost every $x$, let $P(x)$ be a Hermitian positive semi-definite $n \times n$ matrix with entries $p_{i,j} \in H^s(\R^d/\Lambda)$ for all $i,j \in \{1,...,n\}$. 
For a given $x \in \R^d / {\Lambda}$ such that $P(x)$ is defined, let $\lambda_1(x)\ge ...\ge \lambda_n(x)\ge 0$ denote the eigenvalues of $P(x)$.
Then, for each $1 \leq k \leq n$, the eigenvalue function $\lambda_k \in {H}^s(\R^d / \Lambda)$.  
\end{lemma}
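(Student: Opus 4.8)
The plan is to show that each eigenvalue function $\lambda_k$ is in $L^2(\R^d/\Lambda)$ and then to control its fractional Sobolev seminorm using the Gagliardo-type characterization \eqref{eq-sob-norm1} from Lemma \ref{prop-equiv-norm}. Square-integrability is immediate: the eigenvalues are bounded pointwise by the Hilbert–Schmidt norm of $P(x)$, i.e. $0\le \lambda_k(x)\le \bigl(\sum_{i,j}|p_{i,j}(x)|^2\bigr)^{1/2}$, and each $p_{i,j}\in H^s(\R^d/\Lambda)\subset L^2(\R^d/\Lambda)$, so $\lambda_k\in L^{\infty}$-in-the-sense-of-being-dominated, hence in $L^2(\R^d/\Lambda)$. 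The real work is the seminorm bound. By \eqref{eq-sob-norm1} it suffices to estimate
\[
\int_{M_{\Lambda}}\int_{M_{\Lambda}} \frac{|\lambda_k(x+y)-\lambda_k(x)|^2}{|y|^{d+2s}}\,dx\,dy .
\]
The key analytic input is a Lipschitz-type stability estimate for eigenvalues of Hermitian matrices: by the Weyl / Hoffman–Wielandt inequality, for Hermitian matrices $A,B$ one has $|\lambda_k(A)-\lambda_k(B)|\le \|A-B\|_{\mathrm{op}}\le \|A-B\|_{HS} = \bigl(\sum_{i,j}|a_{i,j}-b_{i,j}|^2\bigr)^{1/2}$ for every index $k$. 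Applying this with $A=P(x+y)$ and $B=P(x)$ gives
\[
|\lambda_k(x+y)-\lambda_k(x)|^2 \le \sum_{i,j=1}^n |p_{i,j}(x+y)-p_{i,j}(x)|^2 ,
\]
so the double integral above is bounded by $\sum_{i,j}\int_{M_\Lambda}\int_{M_\Lambda}\frac{|p_{i,j}(x+y)-p_{i,j}(x)|^2}{|y|^{d+2s}}\,dx\,dy$, which by \eqref{eq-sob-norm1} applied to each entry is comparable to $\sum_{i,j}\|p_{i,j}\|^2_{\dt H^s(\R^d/\Lambda)}<\infty$. Combining with the $L^2$ bound yields $\lambda_k\in H^s(\R^d/\Lambda)$.

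One point requiring a line of care is measurability: one must know that $x\mapsto\lambda_k(x)$ is a genuine measurable function so that the integrals make sense. This follows from standard facts — the ordered eigenvalues of a Hermitian matrix are continuous (indeed Lipschitz, as just used) functions of the entries, and the entries $p_{i,j}$ are measurable, so each $\lambda_k$ is measurable as a composition. I would state this briefly and move on. I do not expect a serious obstacle here; the only thing to get right is invoking the correct eigenvalue-perturbation inequality (Weyl's inequality suffices, giving the operator-norm bound for each individual ordered eigenvalue, and then passing to the Hilbert–Schmidt norm to get an entrywise bound) and being careful that it holds for every index $k$ simultaneously, not merely for $\lambda_1$. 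The rest is a direct application of the equivalent-norm Lemma \ref{prop-equiv-norm}.
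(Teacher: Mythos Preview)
Your proposal is correct and follows essentially the same route as the paper: both arguments reduce to the pointwise Lipschitz bound $|\lambda_k(x+y)-\lambda_k(x)|\le \|P(x+y)-P(x)\|_{\mathrm{Frob}}$ and then plug this into the Gagliardo-type characterization \eqref{eq-sob-norm1}. The only cosmetic difference is that the paper derives this eigenvalue stability estimate directly from the Courant--Fischer min-max principle rather than citing Weyl's inequality, and does not spell out the $L^2$ and measurability checks that you (correctly) include.
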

\begin{proof}
The Courant-Fischer-Weyl min-max theorem, e.g., Corollary III.1.2 in \cite{BH}, says that 
$$\lambda_k(\xi)=\max\{ \min\{ \langle u, P(\xi)u \rangle : u \in U, |u|=1\} : \dim(U)=k\},$$
where the minimum is taken over all $k$-dimensional subspaces $U$ of $\C^d$.
Then we have 
\begin{align*}
|\lambda_k(\eta) - \lambda_k(\xi)|
  =&| \max  \{ \min\{ \langle u, P(\eta)u \rangle : u \in U, |u|=1\} : \dim(U)=k\}\\
	&-\max\{ \min\{ \langle v, P(\xi)v \rangle : v \in V, |v|=1\} : \dim(V)=k\}|.
\end{align*}
Without loss of generality, assume that $\lambda_k(\eta)\ge \lambda_k(\xi)$.  
Choose a subspace $U_0$ that realizes the maximum $\lambda_k(\eta)$.  
Then we have 
\begin{align*}
 |\lambda_k(\eta) - \lambda_k(\xi)|  &= \min\{ \langle u, P(\eta)u \rangle : u \in U_0, |u|=1\}\\
      & \indent - \max\{ \min\{ \langle v, P(\xi)v \rangle : v \in V, |v|=1\} : \dim(V)=k\} \\
	&\le  \min\{ \langle u, P(\eta)u \rangle : u \in U_0, |u|=1\} - \min\{ \langle v, P(\xi)v \rangle : v \in U_0, |v|=1\}. 
\end{align*}
Next, choose $u_0\in U_0$ with $\|u_0\|=1$ such that the minimum in the right term is achieved at $u_0$.  Then, 
\begin{align*}
 |\lambda_k(\eta)&- \lambda_k(\xi)|\le  \min\{ \langle u, P(\eta)u \rangle : u \in U_0, |u|=1\} -  \langle u_0, P(\xi)u_0 \rangle \\
          &\le  \langle u_0, P(\eta)u_0 \rangle -  \langle u_0, P(\xi)u_0 \rangle \\
	&= \langle  u_0, (P(\eta)-P(\xi))u_0 \rangle\\
	&\le \|u_0\|^2 \|P(\eta)-P(\xi)\|_{\rm op}\\
	&\le \|P(\eta)-P(\xi)\|_{\rm Frob},
\end{align*}
where the last inequality holds since the Frobenius norm of a matrix controls the spectral norm of a matrix.
Thus, by \eqref{eq-sob-norm1}, we have 
\begin{align*}
	\int_{M_{\Lambda}}\int_{M_{\Lambda}} \frac{|\lambda_k(x+y)-\lambda_k(x)|^2}{|y|^{d+2s}} dy dx
	&\leq \int_{M_{\Lambda}}\int_{M_{\Lambda}} \frac{\|P(x+y)-P(x)\|_{\rm Frob}^2}{|y|^{d+2s}} dy dx\\
	&= \sum_{i,j=1}^n \int_{M_{\Lambda}}\int_{M_{\Lambda}} \frac{|p_{i,j}(x+y)-p_{i,j}(x)|^2}{|y|^{d+2s}} dy dx\\
	&= \sum_{i,j=1}^n \|p_{i,j}\|_{\dt{H}^s(\R^d /\Lambda)}^2 <\infty.
\end{align*}
\end{proof}

It is known that if $f$ is the characteristic function of a measurable set $S\subset \R^d$ with positive finite Lebesgue measure then 
$f \notin {H}^{1/2}(\R^d)$, e.g., see \cite{BBM}.
We need the following version of this result for the Sobolev space of periodic functions ${H}^{1/2}(\R^d/\Lambda)$ where $\Lambda\subset \R^d$ is a lattice.

\begin{lemma}\label{lem-char-fun}
Let $\Lambda \subset \R^d$ be a lattice.
Suppose that $g\in H^{1/2}(\R^d / \Lambda)$ and there exists $S \subset M_\Lambda$ such that
$g(x)=0$ for a.e. $x\in S,$ and $g(x) \geq C>0$  for a.e. $x\in M_{\Lambda} \cap S^c.$
Then either $|S|=0$ or $|S|=|M_{\Lambda}|.$
\end{lemma}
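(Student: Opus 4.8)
The plan is to argue by contradiction: suppose $0 < |S| < |M_\Lambda|$, and derive a lower bound on the Gagliardo-type double integral that appears in the equivalent norm \eqref{eq-sob-norm1} for $H^{1/2}(\R^d/\Lambda)$, showing it diverges. First I would normalize: by hypothesis $g$ takes the value $0$ on $S$ and is bounded below by $C>0$ on $M_\Lambda\setminus S$, so for a.e.\ pair $(x,x+y)$ with $x \in S$ and $x+y\in M_\Lambda\setminus S$ (interpreting $x+y$ modulo $\Lambda$, and using that $g$ is $\Lambda$-periodic) we have $|g(x+y)-g(x)|^2 \geq C^2$. The point is that this forces
\[
\|g\|^2_{\dt H^{1/2}(\R^d/\Lambda)} \asymp \int_{M_\Lambda}\int_{M_\Lambda} \frac{|g(x+y)-g(x)|^2}{|y|^{d+1}}\, dy\, dx \geq C^2 \int\int_{\{x\in S,\ x+y \bmod \Lambda \in S^c\}} \frac{dy\, dx}{|y|^{d+1}},
\]
so it suffices to show the last integral is infinite whenever $S$ and its complement in $M_\Lambda$ both have positive measure.

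The key step is a local density / Lebesgue point argument. Since $0<|S|<|M_\Lambda|$, the set $S$ has a Lebesgue density point $x_0$ (density $1$) and the complement $M_\Lambda\setminus S$ has a density point; more usefully, there is a point $x_0$ of density $1$ for $S$, and near $x_0$ the complement occupies a vanishing fraction of small balls, but it still has positive measure at every scale or else $x_0$ would be interior to $S$ up to null sets — I need to be a little careful here. The cleaner route is: pick a density point $x_0$ of $S$ and a density point $x_1$ of $S^c$; the segment between them must cross the "boundary" in a measure-theoretic sense. Concretely, for small $r$, the ball $B(x_0,r)$ contains a subset of $S$ of measure $\geq (1-\epsilon_r)|B(x_0,r)|$, while there exist arbitrarily small scales at which a fixed small ball contains a chunk of $S^c$ of measure comparable to its volume (taking such a ball around $x_1$, translated appropriately, or using that $S^c$ is non-null so has density points). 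Then integrating $|y|^{-(d+1)}$ over $x$ ranging in the $S$-portion of a shell at distance $\rho$ from $x_1$ and $y$ of length $\asymp\rho$ connecting to the $S^c$-portion contributes $\gtrsim \rho^{-(d+1)} \cdot \rho^d \cdot \rho^d = \rho^{d-1}$... wait, that converges; the correct bookkeeping is the standard one showing $\chi_S\notin H^{1/2}$, where one fixes a density point of $S$, a density point of $S^c$, and for each dyadic scale $2^{-j}$ finds product sets of $(x,y)$ of measure $\asymp 2^{-jd}\cdot 2^{-jd}$ with $|y|\asymp 2^{-j}$ contributing $\asymp 2^{jd} \cdot 2^{-jd}\cdot 2^{-jd}\cdot 2^{-jd}\cdot\ldots$; I would instead just invoke the known result for $\R^d$ directly.

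The cleanest and safest approach, which I would adopt, is to reduce to the known statement that $\chi_S \notin H^{1/2}(\R^d)$ for $S\subset\R^d$ of positive finite measure, cited from \cite{BBM}. Given $g$ as in the hypothesis, consider the genuinely $\R^d$-defined function $h = \chi_{S}$ where now $S\subset M_\Lambda\subset\R^d$ is viewed as a bounded subset of $\R^d$ (not periodized), with $0<|S|<\infty$. One shows: if $g\in H^{1/2}(\R^d/\Lambda)$ then, using \eqref{eq-sob-norm1} restricted to a fundamental domain small enough that translation issues near $\partial M_\Lambda$ are controlled, the finiteness of $\int_{M_\Lambda}\int_{M_\Lambda}|g(x+y)-g(x)|^2|y|^{-(d+1)}$ implies finiteness of $\int_{\R^d}\int_{\R^d}|h(x+y)-h(x)|^2|y|^{-(d+1)}$ — for instance by noting that for $|y|$ bounded and $x$ away from a neighborhood of the boundary, $g(x+y)-g(x)$ restricted to $S$ versus $S^c$ dominates a constant multiple of $\chi_S(x+y)-\chi_S(x)$, and the contribution from large $|y|$ or boundary strips is finite by crude estimates. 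By \eqref{Rd-equiv-sob-norm} this would give $\chi_S\in \dt H^{1/2}(\R^d)$, contradicting the cited result unless $|S|\in\{0,|M_\Lambda|\}$. The main obstacle, and where care is genuinely required, is the passage from the periodic Gagliardo integral over $M_\Lambda\times M_\Lambda$ to the full-space one: one must handle the wrap-around near the boundary of the fundamental domain and the contribution of $|y|$ not small, so that a clean comparison $|g(x+y)-g(x)|\gtrsim |\chi_S(x+y)-\chi_S(x)|$ holds on a region capturing all the divergence. I would isolate this in a short lemma: it is enough to show the $S$-to-$S^c$ transitions at small scales $|y|\to 0$ already force divergence, and at small scales the periodicity wrap-around is irrelevant (points at distance $<\tfrac12\operatorname{dist}$-to-boundary see the same geometry as in $\R^d$), so the known $\R^d$ result applies verbatim inside any small ball meeting both a density point of $S$ and a density point of $S^c$ — and such a small ball exists precisely because $0<|S|<|M_\Lambda|$ implies the essential boundary of $S$ is nonempty.
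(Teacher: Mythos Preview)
Your first move --- comparing $|g(x+y)-g(x)|\ge C\,|\chi_E(x+y)-\chi_E(x)|$ pointwise and thereby reducing to the case $g=\chi_E$ (the $\Lambda$-periodic extension of $\chi_S$) --- is exactly what the paper does. After that, your route and the paper's diverge.

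The paper does \emph{not} invoke the $\R^d$ result from \cite{BBM}. Instead it gives a self-contained two-step argument. For $d=1$ it proves a VMO-type statement directly: from \eqref{eq-sob-norm1} and Cauchy--Schwarz one gets
\[
\frac{1}{|I|^2}\int_I\int_I |g(x)-g(y)|\,dx\,dy \;\le\; \left(\int_I\int_{I-x}\frac{|g(x+y)-g(x)|^2}{|y|^2}\,dy\,dx\right)^{1/2},
\]
which tends to $0$ as $|I|\to 0$ by absolute continuity; but if $0<|S|<|M_\Lambda|$ then Lebesgue differentiation produces arbitrarily short intervals $Q_\epsilon$ with $|Q_\epsilon\cap S|=|Q_\epsilon\cap S^c|=\tfrac12|Q_\epsilon|$, forcing the average oscillation to be $\ge 1/4$. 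For $d\ge 2$ the paper uses the directional characterization \eqref{eq-sob-norm2} to slice: for a.e.\ $x$ and each basis direction $a_k$, the one-variable function $t\mapsto g(x+ta_k)$ lies in $H^{1/2}(\R/\Z)$ and is $\{0,1\}$-valued, so by the $d=1$ case it is a.e.\ constant in $t$; chaining the $d$ directions gives $g$ a.e.\ constant on $M_\Lambda$.

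Your alternative --- localize near an essential boundary point and invoke \cite{BBM} --- is a reasonable strategy, but as written it has a real gap. The result you cite is global: $\chi_S\notin \dt H^{1/2}(\R^d)$ for $0<|S|<\infty$. You need the \emph{local} statement that the Gagliardo integral already diverges on any small ball containing an essential boundary point of $S$; otherwise the comparison between the periodic integral over $M_\Lambda\times M_\Lambda$ and the full-space integral breaks down (indeed, as you half-noticed, artificial jumps of $\chi_S$ along $\partial M_\Lambda$ can make the $\R^d$-norm infinite even when the torus norm is zero --- take $S=M_\Lambda$). Your sentence ``the known $\R^d$ result applies verbatim inside any small ball'' is precisely the point that requires proof, and the standard proof of that local statement is the VMO argument the paper carries out in its Case~1. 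So your plan is salvageable, but the missing lemma is essentially the content of the paper's $d=1$ argument; you are not bypassing it. The abandoned dyadic-shell computation earlier in your write-up (``wait, that converges'') also signals that the bookkeeping here is genuinely delicate and should be done carefully rather than gestured at.
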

\begin{proof}
Without loss of generality, we can assume that $C=1$.  
Let $\chi_E$ be the $\Lambda$-periodic extension of $\chi_S$ to $\R^d$.
Notice that for a.e. $x,y \in \R^d$,
\[ |g(x+y)-g(x)|\ge |\chi_E(x+y)-\chi_E(x)|, \]
and so, by equation \eqref{eq-sob-norm1}, $\chi_E \in H^{1/2}(\R^d/\Lambda)$ since $g\in H^{1/2}(\R^d/\Lambda)$.  
Therefore, it suffices to prove the lemma in the case that $g = \chi_E$ for some $E\subset \R^d / \Lambda$.  
 The proof is divided into two cases depending on whether $d=1$ or $d \geq 2$.\\

{\em Case 1.} We begin by addressing the case $d=1$.  In this case $\Lambda = \alpha \Z$ for some $\alpha>0$, and $M_\Lambda= [-\alpha/2,\alpha/2)$.  For the sake of contradiction, suppose there exists a set $S \subset [-\alpha/2, \alpha/2)$ with $0<|S|<\alpha$ such that $g$ is the $\Lambda$-periodic extension
of $\chi_S$ to $\R$.  

For any interval $I \subset [-\alpha/2, \alpha/2)$, we have 
\begin{align}
\frac{1}{|I|^2}\int_I \int_I & |g(x) - g(y)| dx dy = \frac{1}{|I|^2}\int_I \int_{I-x} |g(x+y)-g(x)| dy dx \notag \\
&\leq \frac{1}{|I|^2} \left(\int_I \int_{I-x} \frac{|g(x+y)-g(x)|^2}{|y|^2} dy dx\right)^{1/2} \left(\int_I \int_{I-x} |y|^2 dy dx\right)^{1/2} \notag \\
&\leq    \left(\int_I \int_{I-x} \frac{|g(x+y)-g(x)|^2}{|y|^2} dy dx\right)^{1/2}.\label{vmo-eq1} 
\end{align}
Since $g$ is the indicator function of a set, one has
\begin{equation} \label{big-y-alpha-eq}
\int_I \int_{\alpha/2 \leq |y| \leq \alpha} \frac{|g(x+y)-g(x)|^2}{|y|^2} dy dx  \leq
\int_I \int_{\alpha/2 \leq |y| \leq \alpha} \frac{1}{|\alpha/2|^2} \ dy dx
\leq
\frac{4|I|}{\alpha}.
\end{equation}
If $x\in I \subset [-\alpha/2, \alpha/2)=M_\Lambda$ then  $I-x \subset [-\alpha, \alpha]$.  
This, together with \eqref{big-y-alpha-eq}, implies that 
\begin{align} 
\int_I \int_{I-x} \frac{|g(x+y)-g(x)|^2}{|y|^2} dy dx
& \leq \int_I \int_{-\alpha}^{\alpha} \frac{|g(x+y)-g(x)|^2}{|y|^2} dy dx \notag \\
& \leq \int_I \int_{-\alpha/2}^{\alpha/2} \frac{|g(x+y)-g(x)|^2}{|y|^2} dy dx  + \frac{4|I|}{\alpha}. \label{vmo-eq2}
\end{align}
Using $g \in H^{1/2}(\R/\Lambda)$, \eqref{eq-sob-norm1},  \eqref{vmo-eq1}, \eqref{vmo-eq2},
and absolute continuity of the Lebesgue integral, it follows that
\begin{equation}\label{lim-zero}
\lim_{|I| \to 0} \frac{1}{|I|^2} \int_{I} \int_{I} |g(x) - g(y)| dx dy =0.
\end{equation}

Since $0<|S|<\alpha$, for every sufficiently small $\epsilon>0,$ there exists an interval $Q_\epsilon \subset [-\alpha/2, \alpha/2)$ 
such that $|Q_\epsilon|<\epsilon$ and $|Q_\epsilon \cap S| = |Q_\epsilon \cap S^c| = |Q_\epsilon|/2$ 
(for example, this follows from the Lebesgue differentiation theorem).
So, for every sufficiently small $\epsilon>0$,
\begin{align}
\frac{1}{|Q_\epsilon|^2} \int_{Q_\epsilon} \int_{Q_\epsilon} |g(x) - g(y)| dx dy 
& \geq \frac{1}{|Q_\epsilon|^2} \int_{Q_\epsilon \cap S} \int_{Q_\epsilon \cap S^c} |g(x) - g(y)| dx dy \notag \\
&= \frac{1}{|Q_\epsilon|^2} \int_{Q_\epsilon \cap S} \int_{Q_\epsilon \cap S^c} 1 \ dx dy \notag \\
& = \frac{|Q_\epsilon \cap S| \thinspace |Q_\epsilon \cap S^c|}{|Q_\epsilon|^2} = 1/4. \label{vmo-eq3}
\end{align}
On the other hand, by \eqref{lim-zero}, 
\begin{equation}\label{lim-zero-eq2}
\lim_{\epsilon \to 0} \frac{1}{|Q_\epsilon|^2} \int_{Q_\epsilon} \int_{Q_\epsilon} |g(x) - g(y)| dx dy =0.
\end{equation}
Since  \eqref{vmo-eq3} and \eqref{lim-zero-eq2} form a contradiction, it follows that either $|S|=0$ or $|S|=\alpha=|M_{\Lambda}|$.\\

\vspace{.1in}
{\em Case 2.} Next, we address the case $d\ge 2$.  Suppose that $S\subset M_{\Lambda}$ and that $g$ is the $\Lambda$-periodic extension of $\chi_S$ to $\R^d$.
We will show that either $|S|=0$ or $|S|=|M_\Lambda|$.

Let $\{a_j\}_{j=1}^d \subset \Lambda$ be a basis for $\Lambda$.  
Recall that $M_\Lambda = \{ \sum_{j=1}^d t_j a_j : -1/2 \leq t_j \leq 1/2 \}$.  For each fixed $x\in M_{\Lambda}$ and $1 \leq k \leq d$, define for $t \in [-1/2, 1/2)$
$$\psi_{x,k}(t) = g(x + t a_k).$$  
Note that for a.e. $x\in M_\Lambda$, $\psi_{x,k}$ is 1-periodic and $\psi_{x,k} \in L^2(\R / \mathbb{Z})$.  Also, by \eqref{eq-sob-norm2}
\begin{align*}
\int_{M_\Lambda} \| \psi_{x,k} \|^2_{\dt{H}^{1/2}(\R / \mathbb{Z})} d x
&\asymp \int_{M_\Lambda}\int_{-1/2}^{1/2} \int_{-1/2}^{1/2} \frac{|\psi_{x,k}(s+t) - \psi_{x,k}(s)|^2}{|t|^{2}} \ ds \thinspace dt \thinspace dx\\
&=\int_{M_\Lambda}\int_{-1/2}^{1/2} \int_{-1/2}^{1/2} \frac{|g(x +(s+t)a_k) - g(x+sa_k)|^2}{|t|^{2}} \ ds \thinspace dt \thinspace dx\\
&=\int_{M_\Lambda} \int_{-1/2}^{1/2} \frac{|g(y+ta_k) - g(y)|^2}{|t|^{2}} \ dt \thinspace dy <\infty.
\end{align*}
Thus, for each $1 \leq k \leq d$ and a.e. $x\in M_\Lambda$, we have $\psi_{x,k} \in H^{1/2}(\R / \mathbb{Z})$.  However, since $g(x) \in \{ 0, 1 \}$ a.e., we also have
that for each $1\leq k \leq d$ and almost every $x \in M_\Lambda$
\begin{equation}
 \psi_{x,k}(t) \in \{ 0 , 1\}, \ \ \hbox{ for } \  a.e. \ t \in \R.
\end{equation}
It follows from Case 1 that for each $1\leq k \leq d$ and almost every $x \in M_\Lambda$
\begin{equation} \label{g-zero-or-one}
g(x+ta_k) = 0 \hbox{ for } a.e. \ t \in \R, \ \ \  \hbox{ or } \ \ \ g(x+ta_k)(t) = 1 \hbox{ for } a.e. \ t \in \R.
\end{equation}

To complete the proof it now suffices  to show that $g(x) = g(y)$ for a.e. $x,y \in M_\Lambda$.  For this, it suffices to show that
$g(\sum_{j=1}^d t_j a_j) = g(\sum_{j=1}^d s_j a_j)$ for a.e. $t = (t_1, \cdots, t_d) \in [-1/2,1/2)^d$ and $s = (s_1, \cdots, s_d) \in [-1/2,1/2)^d$.  
Similarly to Lemma 2 in \cite{BLMN}, and using \eqref{g-zero-or-one}, one has
\begin{align*}
	\int_{[-1/2,1/2)^d}&\int_{[-1/2,1/2)^d}  |g(\sum_{j=1}^d t_j a_j )-g(\sum_{j=1}^d s_j a_j )| dt ds \\
	& \leq \int_{[-1/2,1/2)^d}\int_{[-1/2,1/2)^d} |g(\sum_{j=1}^d t_j a_j )-g(s_1a_1 +\sum_{j=2}^d t_j a_j )| ds dt \\
	&+\int_{[-1/2,1/2)^d}\int_{[-1/2,1/2)^d} |g(s_1a_1+\sum_{j=2}^d t_j a_j )-g(\sum_{j=1}^2 s_ja_j +\sum_{j=3}^d t_j a_j )| ds dt \\
	& \ \vdots \\
	&+\int_{[-1/2,1/2)^d}\int_{[-1/2,1/2)^d} |g(\sum_{j=1}^{d-1} s_j a_j + t_d a_d)-g(\sum_{j=1}^{d} s_ja_j )| ds dt \\
	& = 0.
\end{align*}
Thus, $g(x)=g(y)$ for almost every $x,y \in M_{\Lambda}$.  
\end{proof}

For perspective, the hypothesis of Lemma \ref{lem-char-fun} implies the condition $C|g(x)| \leq |g(x)|^2$ for a.e. $x\in \R^d$, 
which may be viewed as a scalar version of the matrix-valued hypothesis \eqref{eq-frame-cond}.
In particular, Lemma \ref{lem-constant-rank} may be thought of as a matrix-valued generalization of Lemma \ref{lem-char-fun}.  
We are now ready to prove Lemma \ref{lem-constant-rank}.  \\

\noindent {\em Proof of Lemma \ref{lem-constant-rank}.}
Lemma \ref{lem-eigenvalues} shows that the eigenvalue functions $\lambda_k$ of $P$ are in $H^{1/2}(\R^d/\Lambda)$.   Condition (\ref{eq-frame-cond}) implies that for almost every $x\in \R^d/\Lambda$, $\lambda_k(x)=0$ or $\lambda_k(x)\ge t>0$.  From Lemma \ref{lem-char-fun} we have that $\lambda_k$ is either zero almost everywhere or positive almost everywhere.  Therefore, the rank of $P(x)$ is constant almost everywhere.  
\qed \\

\subsection{Combining everything: proofs of the main theorems.} \label{combining-sec}
In this section, we combine all of our preparatory results and prove Theorems \ref{sharp-blt} and \ref{frame-not-riesz}.\\

\noindent {\em Proof of Theorem \ref{sharp-blt}.}
Assume, for the sake of contradiction, that $\widehat{f_k} \in H^{1/2}(\R^d)$ for all $1\le k \le K$.  

{\em Step I.}  Note that the periodizations 
$P_{\Lambda^*}(\widehat{f_k})\in L^{\infty}(\R^d/\Lambda^*)$ and $P_{\Gamma^*}(\widehat{f_k}) \in L^{\infty}(\R^d/ \Gamma^*)$ for each $1 \leq k \leq K$.
To see this, 
let $e_k$ be the $k$th canonical basis vector for $\R^K$, and use \eqref{frame-condition} to obtain
$$\left( [ \widehat{f_k}, \widehat{f_k} ]_{\Lambda^*} \right)^2 
\leq \sum_{m=1}^K \left| [ \widehat{f_m}, \widehat{f_k} ]_{\Lambda^*} \right|^2 = \langle Pe_{k}, Pe_{k} \rangle = \langle P^2e_{k}, e_{k} \rangle
\leq \langle t Pe_{k}, e_{k} \rangle = t [ \widehat{f_k}, \widehat{f_k} ]_{\Lambda^*}.$$
Recalling that $P_{\Lambda^*}(\widehat{f_k}) = [ \widehat{f_k}, \widehat{f_k} ]_{\Lambda^*}$, it follows that 
$|P_{\Lambda^*}(\widehat{f_k})(x)| \leq t$ for a.e. $x\in \R^d$, so that $P_{\Lambda^*}(\widehat{f_k}) \in L^{\infty}(\R^d/\Lambda^*)$.  Similar reasoning, together with
Theorem \ref{thm-eig-bound}, shows that $P_{\Gamma^*}(\widehat{f_k}) \in L^{\infty}(\R^d/\Gamma^*)$.

{\em Step II.}  Lemma \ref{thm-embedding} implies that the bracket products satisfy $[ \widehat{f_m}, \widehat{f_n} ]_{\Lambda^*} \in H^{1/2}(\R^d / \Lambda^*)$ 
and $[ \widehat{f_m}, \widehat{f_n} ]_{\Gamma^*} \in H^{1/2}(\R^d / \Gamma^*)$  for all $1 \leq m,n \leq K$.  
So, all entries of the Gramian $P_{\Lambda^*}(\widehat{F})$ are in $H^{1/2}(\R^d/\Lambda^*)$, and
all entries of $P_{\Gamma^*}(\widehat{F})$ are in $H^{1/2}(\R^d/\Gamma^*)$.
Combining Lemma \ref{lem-constant-rank}, Theorem \ref{thm-eig-bound}, and equation \eqref{frame-condition} shows that the rank of both $P_{\Lambda^*}(\widehat{F})(x)$ and $P_{\Gamma^*}(\widehat{F})(x)$ are constant almost everywhere.

{\em Step III.}
By Proposition \ref{prop-min-generators}, $\text{rank}[P_{\Gamma^*}(\widehat{F})(x)]=\rho(F,\Gamma)$ a.e., 
and $\text{rank}[P_{\Lambda^*}(\widehat{F})(x)]=\rho(F,\Lambda)$ a.e.  Then, from Theorem \ref{thm-rank-formula}, we have 
$$\rho(F,\Lambda)=\text{rank} [P_{\Lambda^*}(\widehat{F})(x)]= \sum_{k \in R}\text{rank} [P_{\Gamma^*}(\widehat{F})(x+k)]= \rho(F,\Gamma) {\rm card}(R)
=\rho(F,\Gamma)[\Gamma: \Lambda],$$
where $R$ is a set of representatives of the quotient $\Lambda^* / \Gamma^*$.
Since this contradicts the assumption that $[\Gamma: \Lambda]$ is not a divisor of $\rho(F,\Lambda)$, 
we must have that $\widehat{f_k} \notin H^{1/2}(\R^d)$ for some $1 \le k \le K$.
\qed \\

\noindent {\em Proof of Theorem \ref{frame-not-riesz}.}
Assume, for the sake of contradiction, that  $\widehat{f_k} \in H^{1/2}(\R^d)$ for all $1\le k \le K$.  
Using Lemma \ref{thm-embedding}, similar reasoning as in the proof of Theorem \ref{sharp-blt}  
implies that each entry of $P=P_{\Lambda^*}(\widehat{F})$ is in $H^{1/2}(\R^d/\Lambda^*).$
The assumption that $K=\rho(F,\Lambda)$, along with
Lemma \ref{lem-constant-rank} and \eqref{frame-condition}, implies that $P$ is full rank almost everywhere. This forces the eigenvalue functions, $\lambda_k$, of $P$ to be nonzero almost everywhere for all $1\le k \le K=\rho(F,\Lambda)$.  However, equation \eqref{frame-condition} shows that the eigenvalue functions are then bounded below by $t^{-1}$ almost everywhere.  This is equivalent to $P$ satisfying the lower bound in equation \eqref{riesz-condition}.  The upper bound also follows from \eqref{frame-condition}.  Thus, $\mathcal{T}^{\Lambda}(F)$ forms a Riesz basis for $V^{\Lambda}(F)$ which gives a contradiction.
\qed

\section{Examples} \label{examples-sec}

The first two examples in this section show that there are multiply generated shift-invariant spaces for which the 
hypotheses of Theorem \ref{sharp-blt} hold, but for which the conclusion of the theorem only holds for a single generator.
The collection of smooth compactly supported functions on $\R^d$ will be denoted by $C_c^{\infty}(\R^d)$.

\begin{example} \label{example-2gen-Rd}
Let $I = [-1/2,1/2)^d$.  Define $f_1 \in L^2(\R^d)$ by $\widehat{f_1}= \chi_I$.  
Take any $g\in C^{\infty}_c(\R^d)$ that is supported on $I$ and satisfies $\|g\|_2=1$, and define $f_2\in L^2(\R^d)$ by
$\widehat{f_2} = g$.

Let $F=\{f_1, f_2\}$, $\Lambda = \mathbb{Z}^d$, and $\Gamma = (\frac{1}{2}\mathbb{Z})\times \mathbb{Z}^{d-1}$.
The space $V^{\Lambda}(F)=V^{\mathbb{Z}^d}(f_1, f_2)$ has the following properties:
\begin{itemize}
\item $\mathcal{T}^{\Lambda}(F)$ is a frame for $V^{\Lambda}(F)$;
\item $V^{\Lambda}(F)$ is $\Gamma$-invariant (it is actually translation invariant);
\item $\widehat{f_1} \notin H^{1/2}(\R^d)$ and $\widehat{f_2} \in C^{\infty}_c(\R^d)\subset  H^{1/2}(\R^d)$;
\item $\rho(F,\Lambda) = 1$ and $[\Gamma : \Lambda] = 2$, so that $[\Gamma : \Lambda]$ does not divide $\rho(F,\Lambda)$.
\end{itemize}
This can be verified by computing the Gramian $P_{\Lambda^*}(\widehat{F})(x)$.  
Note that $\Lambda^* = \Lambda = \mathbb{Z}^d.$
Since $P_{\Lambda^*}(\widehat{F})(x)$ is $\Lambda^*$-periodic, it suffices to only consider $x \in I$ in the subsequent discussion.
A computation shows that for $x \in I$
$$P_{\Lambda^*}(\widehat{F})(x)=
P_{\mathbb{Z}^d}(\widehat{F}) (x)=
\begin{pmatrix}
[\widehat{f_1},\widehat{f_1}]_{\mathbb{Z}^d}(x) & [\widehat{f_1}, \widehat{f_2}]_{\mathbb{Z}^d}(x) \\
[\widehat{f_2},\widehat{f_1}]_{\mathbb{Z}^d}(x) & [\widehat{f_2}, \widehat{f_2}]_{\mathbb{Z}^d}(x) \\
\end{pmatrix}
=
\begin{pmatrix}
1 & \overline{g(x)} \\
g(x) & |g(x)|^2 \\
\end{pmatrix}.
$$
A further computation shows that
$$
\left( P_{\Lambda^*}(\widehat{F})(x) \right)^2 
=  (1+|g(x)|^2)
\begin{pmatrix}
1 & \overline{g(x)} \\
g(x) & |g(x)|^2 \\
\end{pmatrix}
=(1+|g(x)|^2) \ P_{\Lambda^*}(\widehat{F})(x)
$$
Since $g \in C_c^{\infty}(\R^d) \subset L^{\infty}(\R^d)$, we have the operator inequality
$$P_{\Lambda^*}(\widehat{F})(x) \leq \left( P_{\Lambda^*}(\widehat{F})(x) \right)^2  
\leq (1+\|g\|_\infty^2) P_{\Lambda^*}(\widehat{F})(x).$$
So, by \eqref{frame-condition}, $\mathcal{T}^{\Lambda}(F)$ is a frame for $V^{\Lambda}(F)$.

The remaining properties can also be checked easily.  
Similar computations as above, together with Theorem \ref{thm-rank-formula}, show that $V^{\Lambda}(F)$ is $\Gamma$-invariant.
A direct computation shows that $\widehat{f_1} \notin H^{1/2}(\R^d)$.   
The condition $\rho(F,\Lambda)=1$ can be seen by using Proposition \ref{prop-min-generators} 
and noting that $P_{\Lambda^*}(\widehat{F})(x)$ has rank 1 for all $x\in I$.
Finally, it is easily verified that $[\Lambda : \Gamma]=2.$
\end{example}

\begin{example} \label{example-Ngen-R}
Fix any integer $N\geq 2$.  
Let $I=[-1/2,1/2)$ and define $f_{N+1} \in L^2(\R)$ by $\widehat{f_{N+1}} = \chi_I$.  
Fix $0<\epsilon<\frac{1}{2N}$.  Select $f \in C^{\infty}_c(\R)$ with $\|f\|_2=1$ 
such that $f$ is supported on $[0, 1/N]$, and such that $|\widehat{f}(x)|\leq \epsilon$ for all $x \in I$.
For example, such an $f$ can be constructed by suitably dilating and translating a given smooth compactly supported function.
For $1 \leq n \leq N$, define $f_n(x) = f(x- {n}/{N})$.

Define $F=\{f_n\}_{n=1}^{N+1} \subset L^2(\R)$, $\Lambda = \Z$, and $\Gamma = \frac{1}{N} \mathbb{Z}$.
The space $V^{\Lambda}(F)$ satisfies the following properties
\begin{itemize}
\item $\mathcal{T}^{\Lambda}(F)$ is a Riesz basis for $V^{\Lambda}(F)$;
\item $V^{\Lambda}(F)$ is invariant under $\Gamma$;
\item $\rho(F,\Lambda) = N+1$ and $[\Gamma : \Lambda ] = N$, so that $[\Gamma  :  \Lambda ]$ does not divide $\rho(F,\Lambda)$;
\item $\widehat{f_n}\in H^{1/2}(\R)$ for each $1 \leq n \leq N$,
\item $\widehat{f_{N+1}} \notin H^{1/2}(\R)$.
\end{itemize}

The singly generated system $V^{\Z}(f_{N+1})$ is easily seen to be $\frac{1}{N}\Z$-invariant by Theorem \ref{thm-rank-formula} (in fact, $V^{\Lambda}(f_{N+1})$ is translation invariant).  Moreover, the space $V^{\Z}(f_1, \cdots, f_N)$ is $\frac{1}{N}\Z$-invariant by construction.    It follows that $V^{\Lambda}(F)=V^{\Z}(f_1, \cdots, f_{N+1})$ is $\frac{1}{N}\Z$-invariant.  Also, $\widehat{f_n} \in H^{1/2}(\R)$ for each $1 \leq n \leq N$ since $f_n \in C_c^\infty(\R)$.  

Since $\mathcal{T}^{\mathbb{Z}}(\{f_n\}_{n=1}^N)$ is an orthonormal basis for $V^{\mathbb{Z}}(\{f_n\}_{n=1}^N)$, one has
for $1 \leq j, k \leq N$ that $[\widehat{f_j}, \widehat{f_k}]_{\mathbb{Z}}(x) = \delta_{j,k}$ for a.e. $x\in \R$,
for example, see \cite{HSWW}.
Also, if $1 \leq n \leq N$, then for $x\in I$,
$$[\widehat{f_n}, \widehat{f_{N+1}}]_{\mathbb{Z}}(x) = \sum_{j\in \Z} \widehat{f_n}(x-j) \chi_I(x-j)= \widehat{f_n}(x)= e^{-2\pi i n x/N}\widehat{f}(x) .$$
By our assumptions on $f$, we have that for $1 \leq n \leq N$, and $x \in I$,
\begin{equation} \label{eps-period-bnd}
\left| [\widehat{f_{N+1}}, \widehat{f_{n}}]_{\mathbb{Z}}(x) \right| =
\left| [\widehat{f_n}, \widehat{f_{N+1}}]_{\mathbb{Z}}(x) \right| =|\widehat{f}(x)| \leq \epsilon.
\end{equation}

Recalling that $P_{\mathbb{Z}}(\widehat{F}) (x)$ is $\mathbb{Z}$-periodic, we have that for all $x\in I$,
$$P_{\mathbb{Z}}(\widehat{F}) (x) =
\begin{pmatrix}
1 & 0 & \cdots & 0 &  [\widehat{f_1}, \widehat{f_{N+1}}]_{\mathbb{Z}}(x) \\
0 & 1 & \cdots & 0 &  [\widehat{f_2}, \widehat{f_{N+1}}]_{\mathbb{Z}}(x)\\
\vdots   &  \vdots  &  \ddots & \vdots & \vdots \\
0 & 0 & \cdots & 1 & [\widehat{f_N}, \widehat{f_{N+1}}]_{\mathbb{Z}}(x)\\
[\widehat{f_{N+1}}, \widehat{f_{1}}]_{\mathbb{Z}}(x) & [\widehat{f_{N+1}}, \widehat{f_{2}}]_{\mathbb{Z}}(x) & \cdots 
& [\widehat{f_{N+1}}, \widehat{f_{N}}]_{\mathbb{Z}}(x)& 1 \\
\end{pmatrix}.
$$
The Gershgorin circle theorem, together with \eqref{eps-period-bnd}, shows that all eigenvalues of 
$P_{\mathbb{Z}}(\widehat{F}) (x)$ lie in the interval $[1-N\epsilon, 1 + N\epsilon]$.
Since $0<\epsilon<\frac{1}{2N}$, the condition \eqref{riesz-condition} holds with $t=2$, and hence $\mathcal{T}^{\Lambda}(F)$ is a Riesz basis for $V^{\Lambda}(F)$.  Moreover, since $P_{\mathbb{Z}}(\widehat{F}) (x)$ is full rank for a.e. $x\in I$, 
Proposition \ref{prop-min-generators} shows that $\rho(F,\Lambda) = N+1$.
\end{example}

The next example shows that there are multiply generated shift-invariant spaces for which the 
hypotheses of Theorem \ref{frame-not-riesz} hold and for which the conclusion only holds for a single generator.

\begin{example} \label{example2-2gen-Rd}
Let $J=[-1/4,1/4].$  Define $f_1 \in L^2(\R)$ by $\widehat{f_1} = \chi_{J}$.
Select $f_2 \in C^{\infty}_c(\R)$ such that $f_2$ is supported in $[-1/2,1/2]$, $\|f_2\|_2=1$, and $|\widehat{f_2}(x)|<1/2$ for all $x \in J$.

Define $F=\{f_1, f_2\}$ and $\Lambda = \Z$.
The space $V^{\Lambda}(F)$ satisfies the following properties:
\begin{itemize}
\item $\mathcal{T}^{\Lambda}(F)$ is a frame, but not a Riesz basis, for $V^{\Lambda}(F)$;
\item The minimal number of generators $\rho(F,\Lambda)=2$;
\item $\widehat{f_1} \notin H^{1/2}(\R)$ and $\widehat{f_2} \in  H^{1/2}(\R)$.
\end{itemize}
Recall that $P_{\mathbb{Z}}(\widehat{F}) (x)$ is $\mathbb{Z}$-periodic.
A computation shows that for $x\in [-1/2, 1/2]$
$$
P_{\mathbb{Z}}(\widehat{F}) (x)=
\begin{pmatrix}
\chi_J(x) & \chi_J(x) \widehat{f_2}(x) \\
\chi_J(x) \overline{\widehat{f_2}(x)} \ & 1\\
\end{pmatrix}.
$$
For $\ 1/4 < |x| < 1/2$, we have 
$$ P_{\mathbb{Z}}(\widehat{F}) (x)=
\begin{pmatrix}
0 & 0 \\
0 & 1 \\
\end{pmatrix},
$$
so that $\lambda_1(x)=1$ and $\lambda_2(x)=0$, and for $|x| < {1}/{4}$, we have
$$ P_{\mathbb{Z}}(\widehat{F}) (x)=
\begin{pmatrix}
1 & \widehat{f_2}(x) \\
\overline{\widehat{f_2}(x)} & 1 \\
\end{pmatrix}
$$
so that $\lambda_1(x)=1+|\widehat{f_2}(x)|$ and $\lambda_2(x)=1-|\widehat{f_2}(x)|$.  

By \eqref{riesz-condition}, $\mathcal{T}^{\Z}(F)$ is not a Riesz basis for $V^{\Z}(F)$.  However Proposition \ref{prop-min-generators}, \eqref{frame-condition}, and $|\widehat{f_2}(x)|<1/2$ for $x\in J$, show that $\mathcal{T}^{\Z}(F)$ is a frame for $V^{\Z}(F)$ and $\rho(F,\Z)=2$.  
\end{example}



\section*{Acknowledgments}
The authors thank Jeff Hogan, Joe Lakey and Qiyu Sun for helpful discussions related to the literature on shift-invariant invariant spaces. The authors also thank Charly Gr\"ochenig for useful comments related to the Balian-Low theorem.

D.~Hardin was partially supported by NSF DMS Grant 1109266, NSF DMS Grant 0934630, NSF DMS Grant 1521749, NSF DMS Grant 1516400, and
NSF DMS Grant 1412428.
M.~Northington was partially supported by NSF DMS Grant 0934630, NSF DMS Grant 1521749, and NSF DMS Grant 1211687.
A.~Powell was partially supported by NSF DMS Grant 1211687 and NSF DMS Grant 1521749.
A.~Powell gratefully acknowledges the hospitality and support of the Academia Sinica Institute of Mathematics (Taipei, Taiwan).

\section*{Appendix}

In this Appendix, we include a proof of the characterization of Sobolev spaces on the torus from Lemma \ref{prop-equiv-norm}.\\

\noindent{\em Proof of Lemma \ref{prop-equiv-norm}.}

{\em Step I.} We first prove \eqref{eq-sob-norm1}.  
By Parseval's theorem
\begin{align}
|M_\Gamma|\int_{M_{\Gamma}} \int_{{M}_\Gamma} \frac{|f(x+y)-f(x)|^2}{|y|^{d+2s}} dx dy  
& = \int_{M_{\Gamma}}\frac{ \sum_{\xi \in \Gamma^*} |\widehat{f}(\xi)|^2 |e^{-2\pi i \xi \cdot y} -1|^2}{|y|^{d+2s}} dy \notag \\
&= \sum_{\xi \in \Gamma^*} |\widehat{f}(\xi)|^2 G(\xi), \label{eq:Gkterm}
\end{align}
where $G(\xi)= \int_{M_{\Gamma}} \frac{ |e^{-2\pi i \xi \cdot y} -1|^2}{|y|^{d+2s}} dy.$
For $\xi \neq 0$, let $\xi'={\xi}/{|\xi|}$ and note that
\begin{align}
G(\xi)
= 2\int_{M_{\Gamma}} \frac{ 1-\cos(2\pi \xi \cdot y)}{|y|^{d+2s}} dy=2 |\xi|^{2s}\int_{|\xi|M_{\Gamma}} \frac{ 1-\cos(2\pi \xi '\cdot y)}{|y|^{d+2s}} dy. \label{G-eq1}
\end{align}

Since $\Gamma$ and $\Gamma^*$ are full-rank lattices, there exists $C>0$ such that $|\xi| \geq C>0$ for all $\xi \in \Gamma^* \backslash \{ 0 \}$.  Moreover, there exists $r>0$ such that
$B(0,r) = \{x \in \R^d : |x|<r\} \subset {M}_{\Gamma}$.  
Thus,
\begin{equation} \label{ball-M-eq}
\forall \xi \in \Gamma^* \backslash \{0\}, \ \ \ B(0,Cr) \subset |\xi|{M}_{\Gamma}.
\end{equation}
Combining \eqref{G-eq1} and \eqref{ball-M-eq} shows that $G(\xi)$ satisfies the lower bound
\begin{equation} \label{G-low-bnd}
\forall \xi \in \Gamma^* \backslash \{0 \}, \ \ \ 2|\xi|^{2s} \int_{B(0,Cr)}\frac{ 1-\cos(2\pi \xi'\cdot y)}{|y|^{d+2s}} dy \le G(\xi).
\end{equation}
Moreover \eqref{G-eq1} gives the upper bound
\begin{equation} \label{G-up-bnd}
G(\xi) \leq  2|\xi|^{2s} \int_{\R^d} \frac{ 1-\cos(2\pi \xi'\cdot y)}{|y|^{d+2s}} dy.
\end{equation}
Notice that the integrals in \eqref{G-low-bnd} and \eqref{G-up-bnd} are invariant under $y \mapsto Uy$ for all unitary $U: \R^d \to \R^d$.  It follows that 
these integrals are independent of the actual value of $\xi'$.  The integral in \eqref{G-low-bnd} is positive since the integrand is a.e. positive.  It is also easily seen that the integral in \eqref{G-up-bnd} is finite.  Thus, $$G(\xi) \asymp  |\xi|^{2s}.$$
This, together with \eqref{eq:Gkterm} completes the proof of \eqref{eq-sob-norm1}.\\

{\em Step II.} Next, we prove \eqref{eq-sob-norm2}.  
By Parseval's theorem
\begin{align}
|M_\Gamma|\sum_{j=1}^d\int_{[-\frac{1}{2},\frac{1}{2})} \int_{{M}_\Gamma} \frac{ |f(x+ta_j)-f(x)|^2}{|t|^{1+2s}} dx dt  
&= \sum_{j=1}^d\int_{[-\frac{1}{2},\frac{1}{2})}\frac{\sum_{\xi \in \Gamma^*} |\widehat{f}(\xi)|^2 |e^{-2\pi i \xi \cdot t a_j}-1|^2}{|t|^{1+2s}}   dt \notag \\
	&=\sum_{\xi \in \Gamma^*} |\widehat{f}(\xi)|^2 H(\xi), \label{part2-eq1}
\end{align}
where 
$$H(\xi)=\sum_{j=1}^d\int_{[-\frac{1}{2},\frac{1}{2})}\frac{|e^{-2\pi i t \xi \cdot a_j}-1|^2}{|t|^{1+2s}}dt.$$

For $\xi \in \Gamma^* \backslash \{0\}$, let $\xi'={\xi}/{|\xi|}$, and note that
\begin{align} 
H(\xi) &= 2\sum_{j=1}^d \int_{[-\frac{1}{2},\frac{1}{2})} \frac{ 1-\cos(2\pi t \xi \cdot a_j)}{|t|^{1+2s}} dt \notag \\
&=2 |\xi|^{2s}   \sum_{j=1}^d  \int_{-{|\xi|}/{2}}^{{|\xi|}/{2}} \frac{ 1-\cos(2\pi t \xi'\cdot a_j)}{|t|^{1+2s}} dt \label{Heq1}\\
&\leq 2d |\xi|^{2s}   \int_{-\infty}^{\infty} \frac{1}{|t|^{1+2s}} dt. \label{Hupbnd}
\end{align} 

Let $A$ be the $d\times d$ matrix with $a_j$ as its $j$th column.  Then $a_j = Ae_j$ where $\{e_j\}_{j=1}^d \subset \mathbb{Z}^d$ is the canonical
basis for $\R^d$.  Since $\{a_j\}_{j=1}^d$ is a basis for $\Gamma$, the matrix $A$ is invertible.  If $\sigma_d \geq \sigma_1>0$ are the respective largest
and smallest singular values of $A$ (and hence also $A^*$) then
$$\forall x \in \R^d, \ \ \ \sigma_1 |x| \leq | A^*x | \leq \sigma_d |x|.$$
Moreover, if $\xi \in \Gamma \backslash \{ 0 \}$, then $\xi = (A^*)^{-1}z$ for some $z\in \mathbb{Z}^d \backslash \{ 0 \}$,
so that
$$\forall \xi \in \Gamma^* \backslash \{ 0 \}, \ \ \ \sigma_d^{-1} \leq \sigma_d^{-1}|z| \leq |(A^*)^{-1} z| = |\xi|$$
Let $\rho =1/(4\sigma_d).$  
Using that $(1 - \cos(2\pi \theta) )\geq {\theta^2}/{2}$ for all $|\theta| \leq 1/4$, it follows from \eqref{Heq1} that for all $\xi \in \Gamma^* \backslash \{ 0\}$

\begin{align} 
H(\xi) &=2 |\xi|^{2s}   \sum_{j=1}^d  \int_{-{|\xi|}/{2}}^{{|\xi|}/{2}} \frac{ 1-\cos(2\pi t A^*(\xi')\cdot e_j)}{|t|^{1+2s}} dt \notag \\
& \geq 2 |\xi|^{2s}   \sum_{j=1}^d  \int_{-\rho}^{\rho} \frac{ 1-\cos(2\pi t A^*(\xi')\cdot e_j)}{|t|^{1+2s}} dt \notag \\
& \geq 2 |\xi|^{2s}   \sum_{j=1}^d  \int_{-\rho}^{\rho} \frac{|2\pi t A^*(\xi')\cdot e_j|^2}{2|t|^{1+2s}} dt \notag \\
&=4\pi^2 |\xi|^{2s} \left( \sum_{j=1}^d |A^*(\xi')\cdot e_j|^2 \right)  \int_{-\rho}^{\rho} \frac{1}{|t|^{2s-1}} dt \notag \\
& = 4\pi^2 |\xi|^{2s} |A^*(\xi')|^2   \int_{-\rho}^{\rho} \frac{1}{|t|^{2s-1}} dt \notag \\
& \geq 4\pi^2 \sigma_1 |\xi|^{2s}  \int_{-\rho}^{\rho} \frac{1}{|t|^{2s-1}} dt . \label{Hlowbnd}
\end{align}

Hence, by \eqref{Hupbnd} and \eqref{Hlowbnd}
$$H(\xi) \asymp |\xi|^{2s}.$$
This, together with \eqref{part2-eq1} completes the proof of \eqref{eq-sob-norm2}.
\qed

\end{document}